\renewcommand\section{\@startsection{section}{1}{0mm}{-1.5\baselineskip}{\baselineskip}{\normalsize\bfseries\sffamily}}
\renewcommand\subsection{\@startsection{subsection}{1}{0mm}{-\baselineskip}{\baselineskip}{\normalsize\bfseries\sffamily}}
\def\@fnsymbol#1{\ensuremath{\ifcase#1\or *\or **\or \dagger\or \ddagger\or
   \mathsection\or \mathparagraph\or \|\or \dagger\dagger
   \or \ddagger\ddagger \else\@ctrerr\fi}}
\newlength{\preskip}
\newlength{\postskip}
\newtheoremstyle{theorem}{\preskip}{\postskip}{\itshape}{}{\bfseries}{}
{.5em}{\textbf{\thmname{#1}\thmnumber{ #2} (\thmnote{ #3})}}
\newtheoremstyle{definition}{\preskip}{\postskip}{\normalfont}{0pt}{\bfseries}{}{.5em}{}
\newtheoremstyle{remark}{\preskip}{\postskip}{\normalfont}{0pt}{\bfseries}{}{.5em}{}
\theoremstyle{theorem} \newtheorem{thm}{Theorem}[section]
\theoremstyle{theorem} \newtheorem{lem}[thm]{Lemma}
\theoremstyle{theorem} 
\theoremstyle{theorem} \newtheorem{kor}[thm]{Corollary}
\theoremstyle{definition} 
\theoremstyle{remark} \newtheorem{bem_thm}[thm]{Remark}
\theoremstyle{remark} 
\theoremstyle{definition} 
\theoremstyle{definition} \newtheorem*{ack}{Acknowledgements}
\theoremstyle{remark} 
\theoremstyle{remark} 
\theoremstyle{definition}  \newtheorem{bsp}[thm]{Example}
\theoremstyle{definition}  
\theoremstyle{definition} 
\DeclareMathOperator \re {Re}
\DeclareMathOperator \im {Im}
\DeclareMathOperator \Exp {Exp}
\DeclareMathOperator \loc {loc}
\newcommand{\I}{\mathds{1}}
\newcommand\fa{\qquad \text{for all \ }}
\newcommand{\cadlag}{c\`adl\`ag }
\newcommand\mc[1] {\mathcal{#1}}
\newcommand\mbb[1] {\mathds{#1}}
\newcommand{\eps}{\varepsilon}
\author{%
    Franziska K\"{u}hn\thanks{Institut f\"ur Mathematische Stochastik, Fachrichtung Mathematik, Technische Universit\"at Dresden, 01062 Dresden, Germany, \texttt{franziska.kuehn1@tu-dresden.de}} 
}
\title{On Martingale Problems and Feller Processes}
\date{}
\begin{document}

\maketitle

\abstract{\noindent Let $A$ be a pseudo-differential operator with negative definite symbol $q$. In this paper we establish a sufficient condition such that the well-posedness of the $(A,C_c^{\infty}(\mbb{R}^d))$-martingale problem implies that the unique solution to the martingale problem is a Feller process. This provides a proof of a former claim by van Casteren. As an application we prove new existence and uniqueness results for L\'evy-driven stochastic differential equations and stable-like processes with unbounded coefficients. \par \medskip

\noindent\emph{Keywords:} Feller process, martingale problem, stochastic differential equation, stable-like process, unbounded coefficients \par \medskip

\noindent\emph{MSC 2010:} Primary: 60J25. Secondary: 60G44, 60J75, 60H10, 60G51.
}

\section{Introduction} \label{intro}

Let $(L_t)_{t \geq 0}$ be a $k$-dimensional L\'evy process with characteristic exponent $\psi: \mbb{R}^d \to \mbb{C}$ and $\sigma: \mbb{R}^d \to \mbb{R}^{d \times k}$ a continuous function which is at most of linear growth. It is known that there is a intimate correspondence between the L\'evy-driven stochastic differential equation (SDE) \begin{equation}
	dX_t = \sigma(X_{t-}) \, dL_t, \qquad X_0 \sim \mu, \label{sde0}
\end{equation}
and the pseudo-differential operator $A$ with symbol $q(x,\xi) := \psi(\sigma(x)^T \xi)$, i.\,e.\ \begin{equation*}
	Af(x) = - \int_{\mbb{R}^d} q(x,\xi) e^{ix \cdot \xi} \hat{f}(\xi) \, d\xi, \qquad f \in C_c^{\infty}(\mbb{R}^d), \, x \in \mbb{R}^d,
\end{equation*}
where $\hat{f}$ denotes the Fourier transform of a smooth function $f$ with compact support. Kurtz \cite{kurtz} proved that the existence of a unique weak solution to the SDE for any initial distribution $\mu$ is equivalent to the well-posedness of the $(A,C_c^{\infty}(\mbb{R}^d))$-martingale problem. Recently, we have shown in \cite{sde} that a unique solution to the martingale problem -- or, equivalently, to the SDE \eqref{sde0} -- is a Feller process if the L\'evy measure $\nu$ satisfies \begin{equation*}
	\nu(\{y \in \mbb{R}^k; |\sigma(x) \cdot y+x| \leq r\}) \xrightarrow[]{|x| \to \infty} 0 \fa r>0
\end{equation*}
which is equivalent to saying that $A$ maps $C_c^{\infty}(\mbb{R}^d)$ into $C_{\infty}(\mbb{R}^d)$, the space of continuous functions vanishing at infinity. \par
In this paper, we are interested in the following more general question: Consider a pseudo-differential operator $A$ with continuous negative definite symbol $q$,\begin{equation*}
	q(x,\xi) = q(x,0) -ib(x) \cdot \xi + \frac{1}{2} \xi \cdot Q(x) \xi + \int_{y \neq 0} (1-e^{iy \cdot \xi}+iy \cdot \xi \I_{(0,1)}(|y|)) \, \nu(x,dy), \quad x,\xi \in \mbb{R}^d,
\end{equation*}
such that the $(A,C_c^{\infty}(\mbb{R}^d))$-martingale problem is well-posed, i.\,e.\ for any initial distribution $\mu$ there exists a unique solution to the $(A,C_c^{\infty}(\mbb{R}^d))$-martingale problem. Under which assumptions does the well-posedness of the $(A,C_c^{\infty}(\mbb{R}^d))$-martingale problem imply that the unique solution to the martingale problem is a Feller process? Since the infinitesimal generator of the solution is, when restricted to $C_c^{\infty}(\mbb{R}^d)$, the pseudo-differential operator $A$, it is clear that $A$ has to satisfy $Af \in C_{\infty}(\mbb{R}^d)$ for all $f \in C_c^{\infty}(\mbb{R}^d)$. In a paper by van Casteren \cite{cast1} it was claimed that this mapping property of $A$ already implies that the solution is a Feller process; however, this result turned out to be wrong, see \cite[Example 2.27(ii)]{ltp} for a counterexample. Our main result states van Casteren's claim is \emph{correct} if the symbol $q$ satisfies a certain growth condition; the required definitions will be explained in Section~\ref{def}.

\begin{thm} \label{1.1}
	Let $A$ be a pseudo-differential operator with continuous negative definite symbol $q$ such that $q(\cdot,0)=0$ and $A$ maps $C_c^{\infty}(\mbb{R}^d)$ into $C_{\infty}(\mbb{R}^d)$.  If the $(A,C_c^{\infty}(\mbb{R}^d))$-martingale problem is well-posed and \begin{equation}
			\lim_{|x| \to \infty} \sup_{|\xi| \leq |x|^{-1}} |q(x,\xi)| < \infty, \label{lin-grow} \tag{G}
		\end{equation}
		then the solution $(X_t)_{t \geq 0}$ to the martingale problem is a conservative rich Feller process with symbol $q$.
\end{thm}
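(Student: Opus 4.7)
The plan is to extract a strong Markov family from the well-posed martingale problem and then use the growth condition \eqref{lin-grow} to upgrade it to a conservative Feller semigroup. By the standard Ethier--Kurtz machinery, well-posedness yields a strong Markov family $(\mbb{P}^x)_{x \in \mbb{R}^d}$ under which the canonical process $(X_t)$ solves the $(A,C_c^\infty(\mbb{R}^d))$-martingale problem started at $x$; set $T_t f(x) := \mbb{E}^x[f(X_t)]$. It will suffice to show that $(T_t)$ is a conservative Feller semigroup with $C_c^\infty(\mbb{R}^d) \subset \dom L$ and $L|_{C_c^\infty(\mbb{R}^d)} = A$, since the symbol is then automatically identified as $q$.

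The technical core will be a maximal inequality of the form
$$
\mbb{P}^x\Bigl(\sup_{s \leq t} |X_s - x| \geq r\Bigr) \leq c\, t \sup_{|y-x| \leq r}\sup_{|\xi| \leq 1/r} |q(y,\xi)| \fa r,t > 0,
$$
obtained by testing the martingale property against a smooth radial cut-off and exploiting the negative-definite structure of $q$, in the spirit of Schilling's classical estimates. Specialising $r$ to be of order $|x|$, condition \eqref{lin-grow} bounds the right-hand side uniformly for large $|x|$. This both excludes explosion (so, using $q(\cdot,0)=0$ to rule out killing, the process is conservative) and yields vanishing at infinity: for $f \in C_c^\infty(\mbb{R}^d)$ with $\spt f$ contained in a fixed ball, the event $\{X_s \in \spt f \text{ for some } s \leq t\}$ forces $|X_s - x|$ to be of order $|x|$, whence $|T_t f(x)| \to 0$ as $|x| \to \infty$.

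Continuity of $T_t f$ in $x$ follows from the same estimate: the family $(\mbb{P}^{x_n})$ is tight along any sequence $x_n \to x$, and well-posedness forces every weak limit to coincide with $\mbb{P}^x$, so $\mbb{P}^{x_n} \Rightarrow \mbb{P}^x$ and $T_t f \in C(\mbb{R}^d)$. Combined with vanishing at infinity, this gives $T_t\colon C_c^\infty(\mbb{R}^d) \to C_\infty(\mbb{R}^d)$, and density together with $\|T_t\|_\infty \leq 1$ extends the mapping to $T_t\colon C_\infty(\mbb{R}^d) \to C_\infty(\mbb{R}^d)$. Strong continuity at $t=0$ is then immediate from the martingale identity $T_t f - f = \int_0^t T_s(Af)\,ds$ for $f \in C_c^\infty(\mbb{R}^d)$ together with $Af \in C_\infty(\mbb{R}^d)$, extended to all of $C_\infty(\mbb{R}^d)$ by density. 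The same identity shows $f \in \dom L$ with $Lf = Af$ on $C_c^\infty(\mbb{R}^d)$, so the process is a rich Feller process with symbol $q$.

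The main obstacle will be the maximal inequality under the mild hypothesis \eqref{lin-grow}: the counterexample in \cite[Example 2.27(ii)]{ltp} shows that $A\colon C_c^\infty(\mbb{R}^d) \to C_\infty(\mbb{R}^d)$ alone does not imply the Feller property, so \eqref{lin-grow} must enter essentially at the step where one controls how quickly $(X_t)$ can travel from a remote starting point into a fixed compact set. Handling the small-$\xi$, large-$x$ regime of $q$ correctly—in particular, cleanly relating the cut-off estimate to \eqref{lin-grow} while retaining uniformity on compacts—is where the bulk of the work will lie.
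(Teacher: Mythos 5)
Your high-level skeleton (Markov family from well-posedness, Feller property of $T_t$, identification of the generator on $C_c^\infty(\mbb{R}^d)$) matches the paper, but the technical core is not just ``where the bulk of the work will lie'' --- as sketched, it does not work, for two reasons.

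First, a single application of Schilling's maximal inequality is quantitatively too weak under \eqref{lin-grow}. Taking $r\sim|x|/2$, the right-hand side $c\,t\sup_{|y-x|\le r}\sup_{|\xi|\le 1/r}|q(y,\xi)|$ is, by \eqref{lin-grow}, merely \emph{bounded} by $C\,t$ for large $|x|$; it does not tend to $0$ as $|x|\to\infty$ for fixed $t$. So you only get $|T_tf(x)|\le C t\|f\|_\infty$ for $|x|\gg 1$, which does not show that $T_tf$ vanishes at infinity; the same problem defeats your tightness/compact-containment claim as $r\to\infty$ with $x$ in a compact set. The paper gets around this with a genuinely stronger, multiplicative estimate: Lyapunov functions $u(x)=1/(1+|x|^2)$ and $v(x)=1+|x|^2$ satisfying $|Lu|\le cu$, $|Lv|\le cv$, combined with Gronwall, which yield $\mbb{P}^x(\inf_{s\le t}|Y_s|<R)\le (1+R^2)e^{Ct}/(1+|x|^2)$ and the analogous exit bound. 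This is in effect an infinite iteration of the one-step estimate and is exactly where \eqref{lin-grow} enters (via Lemma~\ref{p-3}, which translates \eqref{lin-grow} into growth bounds on the characteristics).

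Second, your argument never uses the hypothesis $A(C_c^{\infty}(\mbb{R}^d))\subseteq C_{\infty}(\mbb{R}^d)$ where it is actually needed. The Lyapunov computation above only controls increments with $|y|<|x|/2$ (the Taylor estimate $|u(x+y)-u(x)-u'(x)y|\le \tfrac12|y|^2|u''(\zeta)|$ with $|\zeta|\ge|x|/2$ breaks down otherwise), and no estimate of the form you propose can rule out a \emph{single large jump} carrying the process from $|x|\gg 1$ directly into $\spt f$. Ruling that out requires precisely $\nu(x,B(-x,r))\to 0$ as $|x|\to\infty$, which is equivalent to the mapping property (Lemma~\ref{map}\ref{map-i}) and is what fails in the counterexample you cite. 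The paper therefore splits $\nu=\nu_s+\nu_l$ at radius $1\vee|x|/2$ (the total mass of $\nu_l$ is finite by Lemma~\ref{p-3}\ref{p-3-iii}), proves the Lyapunov estimates for the small-jump operator only, and reassembles the solution by Poissonian interlacing, propagating the two key estimates \eqref{p-eq145} and \eqref{p-eq17} by induction on the number of large jumps. Without some substitute for this decomposition and for the use of $\nu(x,B(-x,r))\to 0$, your proof cannot close.
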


\begin{bem_thm} \label{1.3} \begin{enumerate}
	\item If the martingale problem is well-posed and $A(C_c^{\infty}(\mbb{R}^d)) \subseteq C_{\infty}(\mbb{R}^d)$, then the solution is a $C_b$-Feller process, i.\,e.\ the associated semigroup $(T_t)_{t \geq 0}$ satisfies $T_t: C_b(\mbb{R}^d) \to C_b(\mbb{R}^d)$ for all $t \geq 0$. The growth condition \eqref{lin-grow} is needed to prove the Feller property; that is, to show that $T_t f$ vanishes at infinity for any $f \in C_{\infty}(\mbb{R}^d)$ and $t \geq 0$.
	\item There is a partial converse to Theorem~\ref{1.1}: If $(X_t)_{t \geq 0}$ is a Feller process and $C_c^{\infty}(\mbb{R}^d)$ is a core for the generator $A$ of $(X_t)_{t \geq 0}$, then the $(A,C_c^{\infty}(\mbb{R}^d))$-martingale problem is well-posed, see e.\,g.\ \cite[Theorem 4.10.3]{kol} or \cite[Theorem 1.37]{matters} for a proof.
	\item The mapping property $A(C_c^{\infty}(\mbb{R}^d)) \subseteq C_{\infty}(\mbb{R}^d)$ can be equivalently formulated in terms of the symbol $q$ and its characteristics, cf.\ Lemma~\ref{map}.
	\item For the particular case that $A$ is the pseudo-differential operator associated with the SDE \eqref{sde0}, i.\,e.\ $q(x,\xi) = \psi(\sigma(x)^T \xi)$, we recover \cite[Theorem 1.1]{sde}. Note that the growth condition \eqref{lin-grow} is automatically satisfied for any function $\sigma$ which is at most of linear growth.
\end{enumerate} \end{bem_thm}

Although it is, in general, hard to prove the well-posedness of a martingale problem, Theorem~\ref{1.1} is very useful since it allows us to use localization techniques for martingale problems to establish new existence results for Feller processes with unbounded coefficients.

\begin{kor} \label{1.2}
	Let $A$ be a pseudo-differential operator with symbol $q$ such that $q(\cdot,0)=0$, $A(C_c^{\infty}(\mbb{R}^d)) \subseteq C_{\infty}(\mbb{R}^d)$ and \begin{equation*}
		\lim_{|x| \to \infty} \sup_{|\xi| \leq |x|^{-1}} |q(x,\xi)|<\infty.
	\end{equation*}
	Assume that there exists a sequence $(q_k)_{k \in \mbb{N}}$ of symbols such that $q_k(x,\xi) = q(x,\xi)$ for all $|x| <k$, $\xi \in \mbb{R}^d$, and the pseudo-differential operator $A_k$ with symbol $q_k$ maps $C_c^{\infty}(\mbb{R}^d)$ into $C_{\infty}(\mbb{R}^d)$. If the $(A_k,C_c^{\infty}(\mbb{R}^d))$-martingale problem is well posed for all $k \geq 1$, then there exists conservative rich Feller process $(X_t)_{t \geq 0}$ with symbol $q$, and $(X_t)_{t \geq 0}$ is the unique solution to the $(A,C_c^{\infty}(\mbb{R}^d))$-martingale problem.
\end{kor}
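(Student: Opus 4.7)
The proof combines the classical localization technique for martingale problems with Theorem~\ref{1.1}. The strategy is to use the well-posedness of each $(A_k,C_c^\infty(\mbb{R}^d))$-martingale problem to deduce well-posedness of the $(A,C_c^\infty(\mbb{R}^d))$-martingale problem, by patching local solutions and ruling out explosion via the growth condition \eqref{lin-grow}, and then to invoke Theorem~\ref{1.1} to identify the solution as a conservative rich Feller process with symbol $q$.

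For uniqueness, I would let $X$ be any solution of the $(A,C_c^\infty(\mbb{R}^d))$-martingale problem with initial distribution $\mu$ and set $\tau_k := \inf\{t \geq 0 : |X_t| \geq k\}$. Since $q_k(x,\xi)=q(x,\xi)$ for $|x|<k$, the pseudo-differential operators $A$ and $A_k$ coincide pointwise on the ball $\{|x|<k\}$, so that the stopped process $X^{\tau_k}$ solves the stopped $(A_k,C_c^\infty(\mbb{R}^d))$-martingale problem. A standard localization lemma (in the spirit of Ethier--Kurtz, Theorem~4.6.1) combined with the well-posedness of the $A_k$-problem then forces the law of $X^{\tau_k}$ to be uniquely determined by $\mu$. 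Since $X$ is $\mbb{R}^d$-valued we have $\tau_k \uparrow \infty$ almost surely, and hence the law of $X$ itself is unique.

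For existence, I would denote by $Y^{(k)}$ the unique solution of the $(A_k,C_c^\infty(\mbb{R}^d))$-martingale problem with initial distribution $\mu$, and let $\sigma_j^{(k)} := \inf\{t \geq 0 : |Y^{(k)}_t| \geq j\}$. Applying the localization argument above to $Y^{(k)}$ shows that, for $j<k$, the stopped laws of $(Y^{(k)})^{\sigma_j^{(k)}}$ and of $(Y^{(j)})^{\sigma_j^{(j)}}$ coincide. These compatible local laws can be patched together to yield a process $X$ on the random interval $[0,\sigma_\infty)$, where $\sigma_\infty := \sup_k \sigma_k^{(k)}$, such that $X$ agrees with $Y^{(k)}$ on $[0,\sigma_k^{(k)})$ and solves the $(A,C_c^\infty(\mbb{R}^d))$-martingale problem there.

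The main obstacle is to show $\sigma_\infty=\infty$ almost surely, i.e.\ that the patched process does not explode. This is exactly where the growth condition \eqref{lin-grow} enters, via the same mechanism as in the Feller-property argument of Theorem~\ref{1.1}. Using cut-offs $\chi_R(x)=\chi(x/R)$ with $\chi \in C_c^\infty(\mbb{R}^d)$, $\chi \equiv 1$ on $\{|x|\leq 1/2\}$, $\supp\chi \subseteq \{|x|\leq 1\}$, the Fourier representation
\begin{equation*}
	A\chi_R(x) = -\int q(x,\eta/R)\,e^{i x\cdot \eta/R}\,\hat\chi(\eta)\,d\eta
\end{equation*}
together with \eqref{lin-grow} and the Schwartz decay of $\hat\chi$ furnishes a bound on $\sup_{|x|\leq R}|A\chi_R(x)|$ that tends to $0$ as $R\to\infty$. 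Applying Dynkin's formula to $\chi_R$ and using $\chi_R(X_{\sigma_R^X})=0$ at the exit time $\sigma_R^X := \inf\{t\geq 0:|X_t|\geq R\}$, one obtains
\begin{equation*}
	\mbb{P}_x\bigl(\sigma_R^X \leq t\bigr) \leq t \sup_{|y|\leq R}|A\chi_R(y)| \xrightarrow{R\to\infty} 0
\end{equation*}
for every fixed $x$ and $t$, and monotonicity of $R\mapsto \sigma_R^X$ yields $\sigma_\infty=\infty$ almost surely. Consequently $X$ is a global $\mbb{R}^d$-valued solution of the $(A,C_c^\infty(\mbb{R}^d))$-martingale problem; by the uniqueness step above, it is the unique such solution, and Theorem~\ref{1.1} finishes the proof by identifying $(X_t)_{t\geq 0}$ as a conservative rich Feller process with symbol $q$.
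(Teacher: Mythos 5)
Your overall architecture (localize for uniqueness, patch local solutions for existence, rule out explosion, then invoke Theorem~\ref{1.1}) is sound, and the uniqueness half is exactly the content of the localization theorem the paper cites (Hoh, Theorem~5.3; Ethier--Kurtz, Theorem~4.6.2). The paper itself is much shorter: it gets global existence of a conservative solution directly from Corollary~\ref{p-5} (which hides the non-explosion argument in the reference to \cite[Corollary 3.2]{change} via Lemma~\ref{p-3}), then applies the localization theorem for well-posedness and finishes with Theorem~\ref{1.1}.

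The genuine gap is in your non-explosion step. Under \eqref{lin-grow} the quantity $\sup_{|y|\leq R}|A\chi_R(y)|$ is \emph{bounded} in $R$ but does \emph{not} tend to $0$. Indeed, \eqref{lin-grow} only gives $|q(y,\eta/R)|\leq 2C(1+|y|^2|\eta|^2/R^2)\leq 2C(1+|\eta|^2)$ for $|y|\leq R$, and for $|y|$ comparable to $R$ there is no decay: take $d=1$ and $q(x,\xi)=|x|\,|\xi|$, which satisfies all hypotheses of the corollary (here $\sup_{|\xi|\leq|x|^{-1}}|q(x,\xi)|\equiv 1$); then for $|y|=R$ one computes $A\chi_R(y)=-\int|\eta|e^{i(y/R)\eta}\hat{\chi}(\eta)\,d\eta$, which converges to a nonzero constant, not to $0$. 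Your argument would work under the strictly stronger hypothesis $\lim_{|x|\to\infty}\sup_{|\xi|\leq|x|^{-1}}|q(x,\xi)|=0$ (sublinear growth), but the corollary only assumes the $\limsup$ is finite. With mere boundedness your estimate yields $\mbb{P}^x(\sigma_R^X\leq t)\leq Ct$ uniformly in $R$, which does not force $\sigma_\infty=\infty$. To close the gap you need the Lyapunov/Gronwall mechanism that the paper uses elsewhere: with $v(x)=1+|x|^2$, Lemma~\ref{p-3} gives $|Av|\leq cv$, whence $\mbb{E}^x v(X_{t\wedge\sigma_R})\leq e^{Ct}v(x)$ and $\mbb{P}^x(\sigma_R\leq t)\leq e^{Ct}v(x)/v(R)\to 0$ (this is the computation in the proof of Lemma~\ref{p-7}\ref{p-7-iii}, and is also what \cite[Corollary 3.2]{change} provides). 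With that replacement, and granting the standard patching/localization machinery you appeal to, the rest of your proof goes through.
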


The paper is organized as follows. After introducing basic notation and definitions in Section~\ref{def}, we prove Theorem~\ref{1.1} and Corollary~\ref{1.2}. In Section~\ref{app} we present applications and examples; in particular we obtain new existence and uniqueness results for L\'evy-driven stochastic differential equations and stable-like processes with unbounded coefficients.

%anwendung auf SDEs, stable-like

\section{Preliminaries} \label{def}

We consider $\mbb{R}^d$ endowed with the Borel $\sigma$-algebra $\mc{B}(\mbb{R}^d)$ and write $B(x,r)$ for the open ball centered at $x \in \mbb{R}^d$ with radius $r>0$; $\mbb{R}^d_{\Delta}$ is the one-point compactification of $\mbb{R}^d$. If a certain statement holds for $x \in \mbb{R}^d$ with $|x|$ sufficiently large, we write ``for $|x| \gg 1$''. For a metric space $(E,d)$ we denote by $C(E)$ the space of continuous functions $f: E \to \mbb{R}$; $C_{\infty}(E)$ (resp.\ $C_b(E)$) is the space of continuous functions which vanish at infinity (resp.\ are bounded). A function $f: [0,\infty) \to E$ is in the Skorohod space $D([0,\infty),E)$ if $f$ is right-continuous and has left-hand limits in $E$. We will always consider $E=\mbb{R}^d$ or $E=\mbb{R}^d_{\Delta}$.  \par
An $E$-valued Markov process $(\Omega,\mc{A},\mbb{P}^x,x \in E,X_t,t \geq 0)$ with \cadlag (right-continuous with left-hand limits) sample paths is called a \emph{Feller process} if the associated semigroup $(T_t)_{t \geq 0}$ defined by \begin{equation*}
	T_t f(x) := \mbb{E}^x f(X_t), \quad x \in E, f \in \mc{B}_b(E) := \{f: E \to \mbb{R}; \text{$f$ bounded, Borel measurable}\}
\end{equation*}
has the \emph{Feller property}, i.\,e.\ $T_t f \in C_{\infty}(E)$ for all $f \in C_{\infty}(E)$, and $(T_t)_{t \geq 0}$ is \emph{strongly continuous at $t=0$}, i.\,e. $\|T_tf-f\|_{\infty} \xrightarrow[]{t \to 0} 0$ for any $f \in C_{\infty} (E)$. Following \cite{rs98} we call a Markov process $(X_t)_{t \geq 0}$ with \cadlag sample paths a \emph{$C_b$-Feller process} if $T_t(C_b(E)) \subseteq C_b(E)$ for all $t \geq 0$. An $\mbb{R}^d_{\Delta}$-valued Markov process with semigroup $(T_t)_{t \geq 0}$ is \emph{conservative} if $T_t \I_{\mbb{R}^d} = \I_{\mbb{R}^d}$ for all $t \geq 0$. \par
If the smooth functions with compact support $C_c^{\infty}(\mbb{R}^d)$ are contained in the domain of the generator $(L,\mc{D}(L))$ of a Feller process $(X_t)_{t \geq 0}$, then we speak of a \emph{rich} Feller process. A result due to von Waldenfels and Courr\`ege, cf.\ \cite[Theorem 2.21]{ltp}, states that the generator $L$ of an $\mbb{R}^d$-valued rich Feller process is, when restricted to $C_c^{\infty}(\mbb{R}^d)$, a pseudo-differential operator with negative definite symbol: \begin{equation*}
	Lf(x) =  - \int_{\mbb{R}^d} e^{i \, x \cdot \xi} q(x,\xi) \hat{f}(\xi) \, d\xi, \qquad f \in C_c^{\infty}(\mbb{R}^d), \, x \in \mbb{R}^d
\end{equation*}
where $\hat{f}(\xi) := \mc{F}f(\xi):= (2\pi)^{-d} \int_{\mbb{R}^d} e^{-ix \cdot \xi} f(x) \, dx$ denotes the Fourier transform of $f$ and \begin{equation}
	q(x,\xi) = q(x,0) - i b(x) \cdot \xi + \frac{1}{2} \xi \cdot Q(x) \xi + \int_{\mbb{R}^d \backslash \{0\}} (1-e^{i y \cdot \xi}+ i y\cdot \xi \I_{(0,1)}(|y|)) \, \nu(x,dy). \label{cndf}
\end{equation}
We call $q$ the \emph{symbol} of the Feller process $(X_t)_{t \geq 0}$ and of the pseudo-differential operator; $(b,Q,\nu)$ are the \emph{characteristics} of the symbol $q$. For each fixed $x \in \mbb{R}^d$, $(b(x),Q(x),\nu(x,dy))$ is a L\'evy triplet, i.\,e.\ $b(x) \in \mbb{R}^d$, $Q(x) \in \mbb{R}^{d \times d}$ is a symmetric positive semidefinite matrix and $\nu(x,dy)$ a $\sigma$-finite measure on $(\mbb{R}^d \backslash \{0\},\mc{B}(\mbb{R}^d \backslash \{0\}))$ satisfying $\int_{y \neq 0} \min\{|y|^2,1\} \, \nu(x,dy)<\infty$. We use $q(x,D)$ to denote the pseudo-differential operator $L$ with continuous negative definite symbol $q$. A family of continuous negative definite functions $(q(x,\cdot))_{x \in \mbb{R}^d}$ is \emph{locally bounded} if for any compact set $K \subseteq \mbb{R}^d$ there exists $c>0$ such that $|q(x,\xi)| \leq c(1+|\xi|^2)$ for all $x \in K$, $\xi \in \mbb{R}^d$.  By \cite[Lemma 2.1, Remark 2.2]{rs-grow}, this is equivalent to \begin{equation}
		\forall K \subseteq \mbb{R}^d \, \, \text{cpt.}: \, \, \, \sup_{x \in K} |q(x,0)| +\sup_{x \in K} |b(x)| + \sup_{x \in K} |Q(x)|+ \sup_{x \in K} \int_{y \neq 0} (|y|^2 \wedge 1) \, \nu(x,dy) <\infty.  \label{loc-bdd}
\end{equation}
If \eqref{loc-bdd} holds for $K=\mbb{R}^d$, we say that $q$ has bounded coefficients. We will frequently use the following result.

\begin{lem} \label{map}
	Let $L$ be a pseudo-differential operator with continuous negative definite symbol $q$ and characteristics $(b,Q,\nu)$. Assume that $q(\cdot,0)=0$ and that $q$ is locally bounded. \begin{enumerate}
		\item\label{map-i} $\lim_{|x| \to \infty} Lf(x) = 0$ for all $f \in C_c^{\infty}(\mbb{R}^d)$ if, and only if, \begin{equation}
			\lim_{|x| \to \infty} \nu(x,B(-x,r)) = 0 \fa r>0. \label{def-eq5}
		\end{equation}
		\item\label{map-ii} If $\lim_{|x| \to \infty} \sup_{|\xi| \leq |x|^{-1}} |\re q(x,\xi)| = 0$, then \eqref{def-eq5} holds.
		\item\label{map-iii} $L(C_c^{\infty}(\mbb{R}^d)) \subseteq C(\mbb{R}^d)$ if, and only if, $x \mapsto q(x,\xi)$ is continuous for all $\xi \in \mbb{R}^d$.
	\end{enumerate}
\end{lem}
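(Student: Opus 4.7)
Plan:

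The three parts are handled in sequence, working throughout from the two equivalent forms of $L$: the integro-differential representation
$$Lf(x) = b(x)\cdot\nabla f(x) + \tfrac{1}{2}\tr\bigl(Q(x)\hess f(x)\bigr) + \int_{y\neq 0}\bigl(f(x+y)-f(x)-y\cdot\nabla f(x)\I_{(0,1)}(|y|)\bigr)\,\nu(x,dy),$$
valid on $C_c^\infty(\mbb{R}^d)$ thanks to $q(\cdot,0)=0$ and local boundedness, and the pseudo-differential representation via $q$ and $\hat f$.

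For part \eqref{map-i}, if $\supp f\subseteq B(0,R)$ and $|x|>R$ then $f$ and its first two derivatives vanish at $x$, so $Lf(x)=\int_{B(-x,R)}f(x+y)\,\nu(x,dy)$, whence $|Lf(x)|\leq\|f\|_\infty\nu(x,B(-x,R))$; this yields the implication \eqref{def-eq5}$\Rightarrow\lim_{|x|\to\infty}Lf(x)=0$. For the converse, testing against a non-negative $f\in C_c^\infty(\mbb{R}^d)$ with $f\equiv 1$ on $B(0,r)$ and $\supp f\subseteq B(0,2r)$ gives $Lf(x)\geq \nu(x,B(-x,r))$ for $|x|>2r$. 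For part \eqref{map-ii}, I would appeal to the classical L\'evy-measure tail bound $\nu(x,\{|y|\geq R\})\leq c_d\sup_{|\xi|\leq R^{-1}}\re q(x,\xi)$, obtained by integrating the pointwise inequality $\int(1-\cos(y\cdot\xi))\,\nu(x,dy)\leq \re q(x,\xi)$ over $\xi\in B(0,R^{-1})$ and applying Fubini together with the uniform lower bound $|B(0,R^{-1})|^{-1}\int_{B(0,R^{-1})}(1-\cos(y\cdot\xi))\,d\xi\geq c_d>0$ for $|y|\geq R$. Since $B(-x,r)\subseteq\{|y|\geq |x|/2\}$ for $|x|>2r$, choosing $R=|x|/2$ gives $\nu(x,B(-x,r))\leq c_d\sup_{|\xi|\leq 2/|x|}|\re q(x,\xi)|$; the sub-additivity of $\sqrt{|\re q(x,\cdot)|}$, a consequence of $\re q(x,\cdot)$ being continuous negative definite, then bounds the latter by $4\sup_{|\xi|\leq 1/|x|}|\re q(x,\xi)|$, closing the argument.

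For part \eqref{map-iii}, the ``if'' direction is a dominated-convergence argument applied to the pseudo-differential form $Lf(x)=-\int e^{ix\cdot\xi}q(x,\xi)\hat f(\xi)\,d\xi$: the integrand is pointwise continuous in $x$, local boundedness supplies the estimate $|q(x,\xi)|\leq c_K(1+|\xi|^2)$ for $x$ in a compact $K$, and the Schwartz decay of $\hat f$ provides the integrable majorant. The ``only if'' direction is the main technical obstacle and requires extracting the continuity of the three characteristics separately. The plan is two-step. First, for $f\in C_c^\infty$ which vanishes together with its first two derivatives at a base point $x_0$, the identity $Lf(x_0)=\int f(x_0+y)\,\nu(x_0,dy)$ (as in part \eqref{map-i}) combined with a family of test functions concentrated on shells $\{|y|\geq\epsilon\}$ yields the weak continuity of $\nu(x,\cdot)$ on sets bounded away from the origin. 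Second, for a cutoff $\chi\in C_c^\infty$ with $\chi\equiv 1$ on a neighbourhood of $0$ and $g_{x,\xi}(y):=\chi(y-x)e^{i\xi\cdot(y-x)}$, a direct computation with the integro-differential form gives
$$Lg_{x,\xi}(x) = -q(x,\xi) - \int_{y\neq 0}(1-\chi(y))e^{iy\cdot\xi}\,\nu(x,dy),$$
whose residual integral is against a bounded continuous function supported in $\{|y|\geq\epsilon\}$ and is therefore continuous in $x$ by the first step. The delicacy is that $g_{x,\xi}$ depends on the base point, so the hypothesis $L(C_c^\infty)\subseteq C$ has to be combined with locally uniform control on $Lg_{x_0,\xi}(x)$ for $x$ near $x_0$ in order to pass to the diagonal $x=x_0$; once this is done, continuity of $q(\cdot,\xi)$ follows.
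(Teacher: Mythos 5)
The paper itself gives no proof of this lemma (it only cites \cite{ltp}, \cite{diss}, \cite{rs98}, \cite{change}), so I am comparing your argument with the standard proofs in those references. Your treatment of \ref{map-i}, of \ref{map-ii}, and of the ``if'' direction of \ref{map-iii} is correct and is essentially the argument used there: the reduction $Lf(x)=\int f(x+y)\,\nu(x,dy)$ for $x$ outside $\supp f$, the averaged truncation inequality $\nu(x,\{|y|\geq R\})\leq c_d\sup_{|\xi|\leq R^{-1}}\re q(x,\xi)$ together with the subadditivity of $\sqrt{\re q(x,\cdot)}$, and dominated convergence against the majorant $c_K(1+|\xi|^2)|\hat f(\xi)|$ are all sound.

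The ``only if'' direction of \ref{map-iii} has a genuine gap, and it is not the one you flag. The diagonal issue is harmless: fixing the base point and evaluating $L\bigl(\chi(\cdot-x_0)e^{i\xi\cdot(\cdot-x_0)}\bigr)$ at nearby points $z$ gives $e^{i\xi\cdot(z-x_0)}\bigl[-q(z,\xi)-\int(1-\chi(z-x_0+y))e^{iy\cdot\xi}\,\nu(z,dy)\bigr]$, so no locally uniform control over a family of test functions is needed. The real problem is the residual integral: its integrand is bounded and supported in $\{|y|\geq\eps\}$ but does \emph{not} vanish at infinity, whereas your first step (testing against $C_c^\infty$ functions concentrated on shells) only yields \emph{vague} continuity of $\nu(z,\cdot)$ away from the origin. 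Vague continuity does not give convergence of integrals of bounded non-decaying functions: take $d=1$, $b=Q=0$, $\nu(z,dy)=\delta_{a(z)}(dy)$ with $a(z)\to\infty$ as $z\to x_0$ and $\nu(x_0,\cdot)=0$; then $\nu(z,\cdot)\to 0$ vaguely but $\int(1-\chi(y))e^{iy\xi}\,\nu(z,dy)=e^{ia(z)\xi}\not\to 0$. To close the argument you must additionally prove the uniform tail estimate $\lim_{R\to\infty}\limsup_{z\to x_0}\nu(z,\{|y|\geq R\})=0$. This does follow from the hypothesis, but only by a separate use of it: apply $L$ to the cutoffs $f_n:=\chi(\cdot/n-x_0/n)$ themselves, note $Lf_n(z)=-\int(1-\chi((z-x_0+y)/n))\,\nu(z,dy)\leq -\nu(z,\{|y|\geq c n\})$ for $z$ near $x_0$, that $Lf_n(x_0)\to 0$ as $n\to\infty$ by dominated convergence, and that each $Lf_n$ is continuous at $x_0$. (In the counterexample above this is exactly where continuity of $Lf$ fails, via the escaping term $-f(z)\nu(z,\mbb{R}\setminus\{0\})$.) With this tightness step added, your outline becomes a complete proof.
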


For a proof of Lemma~\ref{map}\ref{map-i},\ref{map-ii} see \cite[Lemma 3.26]{ltp} or \cite[Theorem 1.27]{diss}; \ref{map}\ref{map-iii} goes back to Schilling \cite[Theorem 4.4]{rs98}, see also \cite[Theorem A.1]{change}. \par
If the symbol $q$ of a rich Feller process $(L_t)_{t \geq 0}$ does not depend on $x$, i.\,e.\ $q(x,\xi) = q(\xi)$, then $(L_t)_{t \geq 0}$ is a \emph{L\'evy process}. This is equivalent to saying that $(L_t)_{t \geq 0}$ has stationary and independent increments and \cadlag sample paths. The symbol $q=q(\xi)$ is called \emph{characteristic exponent}. Our standard reference for L\'evy processes is the monograph \cite{sato} by Sato. \emph{Weak uniqueness} holds for the \emph{L\'evy-driven stochastic differential equation} (SDE, for short) \begin{equation*}
	dX_t = \sigma(X_{t-}) \, dL_t, \qquad X_0 \sim \mu,
\end{equation*}
if any two weak solutions of the SDE have the same finite-dimensional distributions. We refer the reader to Situ \cite{situ} for further details. \par
Let $(A,\mc{D})$ be a linear operator with domain $\mc{D} \subseteq \mc{B}_b(E)$ and $\mu$ a probability measure on $(E,\mc{B}(E))$. A $d$-dimensional stochastic process $(X_t)_{t \geq 0}$, defined on a probability space $(\Omega,\mc{A},\mbb{P}^{\mu})$, with \cadlag sample paths is a \emph{solution to the $(A,\mc{D})$-martingale problem with initial distribution $\mu$}, if $X_0 \sim \mu$ and \begin{equation*}
	M_t^f := f(X_t)-f(X_0)- \int_0^t Af(X_s) \, ds, \qquad t \geq 0,
\end{equation*}
is a $\mbb{P}^{\mu}$-martingale with respect to the canonical filtration of $(X_t)_{t \geq 0}$ for any $f \in \mc{D}$. By considering the measure $\mbb{Q}^{\mu}$ induced by $(X_t)_{t \geq 0}$ on $D([0,\infty),E)$ we may assume without loss of generality that $\Omega = D([0,\infty),E)$ is the Skorohod space and $X_t(\omega) := \omega(t)$ the canonical process. The $(A,\mc{D})$-martingale problem is \emph{well-posed} if for any initial distribution $\mu$ there exists a unique (in the sense of finite-dimensional distributions) solution to the $(A,\mc{D})$-martingale problem with initial distribution $\mu$. For a comprehensive study of martingale problems see \cite[Chapter 4]{ethier}.

\section{Proof of the main results} \label{p}

In order to prove Theorem~\ref{1.1} we need the following statement which allows us to formulate the linear growth condition \eqref{lin-grow} in terms of the characteristics.

\begin{lem} \label{p-3}
	Let $(q(x,\cdot))_{x \in \mbb{R}^d}$ be a family of continuous negative definite functions with characteristics $(b,Q,\nu)$ such that $q(\cdot,0)=0$. Then \begin{equation}
		\limsup_{|x| \to \infty} \sup_{|\xi| \leq |x|^{-1}} |q(x,\xi)| < \infty \tag{G}
	\end{equation}
	if, and only if,  there exists an absolute constant $c>0$ such that each of the following conditions is satisfied for $|x| \gg 1$.\begin{enumerate}
		\item\label{p-3-i} $\left| b(x) + \int_{1 \leq |y|<|x|/2} y \, \nu(x,dy) \right| \leq c(1+|x|)$.
		\item\label{p-3-ii} $|Q(x)| + \int_{|y| \leq |x|/2} |y|^2 \, \nu(x,dy) \leq c(1+|x|^2)$.
		\item\label{p-3-iii} $\nu(x, \{y \in \mbb{R}^d; |y| \geq  1 \vee |x|/2\}) \leq c$.
	\end{enumerate}
	If \eqref{lin-grow} holds and $q$ is locally bounded, cf.\ \eqref{loc-bdd}, then \ref{p-3-i}--\ref{p-3-iii} hold for all $x \in \mbb{R}^d$.
\end{lem}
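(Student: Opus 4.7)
The plan rests on one algebraic decomposition and one analytic averaging inequality. For $|x|\geq 2$ I split the jump integral at $|y|=|x|/2$; on the annulus $\{1\leq|y|<|x|/2\}$ the built-in compensator $\mathbf{1}_{(0,1)}(|y|)$ vanishes, so adding and subtracting $iy\cdot\xi$ there and using $q(\cdot,0)=0$ rearranges the symbol as
\begin{equation*}
    q(x,\xi) = -i\xi\cdot\tilde b(x) + \tfrac12\xi\cdot Q(x)\xi + \int_{|y|<|x|/2}(1-e^{iy\cdot\xi}+iy\cdot\xi)\,\nu(x,dy) + \int_{|y|\geq|x|/2}(1-e^{iy\cdot\xi})\,\nu(x,dy),
\end{equation*}
where $\tilde b(x):=b(x)+\int_{1\leq|y|<|x|/2}y\,\nu(x,dy)$ is precisely the vector appearing in~\ref{p-3-i}. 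Every subsequent estimate is read off this formula together with the universal bounds $|1-e^{it}+it|\leq t^2/2$ and $|1-e^{it}|\leq 2$.

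The easy direction (\ref{p-3-i}--\ref{p-3-iii} $\Rightarrow$ (G)) is immediate: for $|\xi|\leq 1/|x|$ one has $|y\cdot\xi|\leq 1/2$ on $\{|y|<|x|/2\}$, so the four pieces in the decomposition are dominated respectively by $|\xi|\,|\tilde b(x)|$, $\tfrac12|\xi|^2|Q(x)|$, $\tfrac12|\xi|^2\int_{|y|<|x|/2}|y|^2\,\nu(x,dy)$ and $2\nu(x,\{|y|\geq|x|/2\})$, each bounded by an absolute constant via~\ref{p-3-i}--\ref{p-3-iii}.

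The hard direction ((G) $\Rightarrow$ \ref{p-3-i}--\ref{p-3-iii}) rests on the dimensional lower bound
\begin{equation*}
    \frac{1}{|B(0,r)|}\int_{|\xi|\leq r}(1-\cos(y\cdot\xi))\,d\xi\geq \gamma_d\min(r^2|y|^2,1),\qquad y\in\mbb{R}^d,\ r>0,
\end{equation*}
which is standard in this circle of ideas (separate the small-$r|y|$ regime by Taylor from the large-$r|y|$ regime via a Bessel-type oscillation estimate, cf.\ \cite{rs-grow}). Since $\re q(x,\cdot)\geq 0$, averaging $\re q(x,\xi)$ over $|\xi|\leq r:=1/|x|$ and applying~(G) yields $|Q(x)|/|x|^2+\int\min(|y|^2/|x|^2,1)\,\nu(x,dy)\leq C$. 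Multiplying by $|x|^2$ furnishes~\ref{p-3-ii} and the bound $\nu(x,\{|y|>|x|\})\leq C$; a Chebyshev estimate on $\int_{|x|/2\leq|y|\leq|x|}|y|^2\,\nu(x,dy)$ handles the remaining shell and completes~\ref{p-3-iii}. For~\ref{p-3-i}, the decomposition gives
\begin{equation*}
    |\im q(x,\xi)+\xi\cdot\tilde b(x)|\leq \tfrac12|\xi|^2\int_{|y|<|x|/2}|y|^2\,\nu(x,dy)+\nu(x,\{|y|\geq|x|/2\}),
\end{equation*}
and the freshly proved~\ref{p-3-ii},~\ref{p-3-iii} show that the right-hand side is bounded for $|\xi|\leq 1/|x|$. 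Together with (G) this forces $|\xi\cdot\tilde b(x)|\leq C'$ uniformly there, and specialising to $\xi$ parallel to $\tilde b(x)$ with $|\xi|=1/|x|$ gives $|\tilde b(x)|\leq C'|x|$, i.e.~\ref{p-3-i}.

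Finally, extending \ref{p-3-i}--\ref{p-3-iii} from $|x|\gg 1$ to all $x$ under local boundedness is pure bookkeeping: the identity $\nu(x,\{|y|\geq 1\})=\int_{|y|\geq 1}(|y|^2\wedge 1)\,\nu(x,dy)$ combined with~\eqref{loc-bdd} controls the large-jump mass locally, and then the obvious estimates $|\tilde b(x)|\leq|b(x)|+(|x|/2)\nu(x,\{|y|\geq 1\})$ and $\int_{|y|\leq|x|/2}|y|^2\,\nu(x,dy)\leq\int(|y|^2\wedge 1)\,\nu(x,dy)+(|x|/2)^2\nu(x,\{|y|\geq 1\})$ close the loop. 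The one genuine obstacle is the averaging inequality in the third paragraph; once that is accepted everything else is careful but routine bookkeeping organised around the decomposition.
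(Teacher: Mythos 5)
Your proof is correct, and its skeleton coincides with the paper's: the same recentering of the drift at the cutoff $|y|=|x|/2$ (producing exactly the vector $\tilde b(x)$ of \ref{p-3-i}), the same elementary estimates $|1-e^{it}+it|\le t^2/2$ and $|1-e^{it}|\le 2$ for the sufficiency direction, and the same triangle-inequality extraction of \ref{p-3-i} once \ref{p-3-ii} and \ref{p-3-iii} are known. The one place where you genuinely deviate is the analytic ingredient behind \ref{p-3-ii} and \ref{p-3-iii}: you average $\re q(x,\cdot)$ uniformly over the ball $\{|\xi|\le 1/|x|\}$ and invoke the lower bound $\frac{1}{|B(0,r)|}\int_{|\xi|\le r}(1-\cos(y\cdot\xi))\,d\xi\ge\gamma_d\min(r^2|y|^2,1)$, whereas the paper averages against the weight $g$ satisfying $\int(1-\cos(\eta\cdot z))g(\eta)\,d\eta=|z|^2/(1+|z|^2)$ and then controls $\int\re q(x,\eta/|x|)g(\eta)\,d\eta$ via the standard bound $|\psi(\xi)|\le 2\sup_{|\zeta|\le1}|\psi(\zeta)|(1+|\xi|^2)$ for continuous negative definite functions. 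Both kernels produce an equivalent truncated second-moment functional of $\nu(x,dy)$; your version is marginally more self-contained (no auxiliary identity for $g$, and (G) is used directly as a uniform bound on the averaging region rather than through the quadratic-growth estimate), at the cost of having to verify the Bessel-type oscillation bound in dimension $d$ and to add the Chebyshev step for the shell $|x|/2\le|y|\le|x|$, which the paper's choice of $\inf_{|z|\ge1/2}|z|^2/(1+|z|^2)=1/5$ absorbs in one stroke. Your closing bookkeeping for the ``all $x$'' statement under local boundedness is also fine and is in fact more explicit than the paper, which leaves that step to the reader.
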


\begin{proof}
	First we prove that \ref{p-3-i}--\ref{p-3-iii} are sufficient for \eqref{lin-grow}. Because of \ref{p-3-i} and \ref{p-3-ii} it suffices to show that \begin{equation*}
		p(x,\xi) := \int_{y \neq 0} (1-e^{iy \cdot \xi}+iy \cdot \xi \I_{(0,|x|/2)}(|y|)) \, \nu(x,dy)
	\end{equation*}
	satisfies the linear growth condition \eqref{lin-grow}. Using the elementary estimates \begin{equation*}
		|1-e^{iy \cdot \xi}| \leq 2 \qquad \text{and} \qquad |1-e^{iy \cdot \xi}+iy \cdot \xi| \leq \frac{1}{2} |\xi|^2 |y|^2
	\end{equation*}
	we find \begin{align*}
		|p(x,\xi)|
		\leq \frac{|\xi|^2}{2} \int_{0<|y| < |x|/2} |y|^2 \, \nu(x,dy) + 2 \int_{|y| \geq |x|/2} \, \nu(x,dy)
	\end{align*}
	for all $|x| \geq 1$ which implies by \ref{p-3-ii} and \ref{p-3-iii} that \begin{equation*}
		\limsup_{|x| \to \infty} \sup_{|\xi| \leq |x|^{-1}} |p(x,\xi)| < \infty.
	\end{equation*}
	It remains to prove that \eqref{lin-grow} implies \ref{p-3-i}-\ref{p-3-iii}. For \ref{p-3-ii} and \ref{p-3-iii} we use a similar idea as in \cite[proof of Theorem 4.4]{rs98}. It is known that the function $g$ defined by \begin{equation*}
		g(\eta) := \frac{1}{2} \int_{(0,\infty)} \frac{1}{(2\pi r)^{d/2}} \exp \left( - \frac{|\eta|^2}{2r} - \frac{r}{2} \right) \, dr, \qquad \eta \in \mbb{R}^d,
	\end{equation*}
	has a finite second moment, i.\,e.\ $\int_{\mbb{R}^d} |\eta|^2 g(\eta) \, d\eta<\infty$, and satisfies \begin{equation}
		\frac{|z|^2}{1+|z|^2} = \int_{\mbb{R}^d} (1-\cos(\eta \cdot z)) g(\eta) \, d\eta \label{p-eq7}
	\end{equation}
	for all $z \in \mbb{R}^d$. As \begin{equation*}
		\inf_{|z| \geq 1/2} \frac{|z|^2}{1+|z|^2} = \frac{1}{5}>0
	\end{equation*}
	we obtain by applying Tonelli's theorem \begin{align*}
		\frac{1}{5} \nu(x; \{y; |y| \geq |x|/2\})
		\leq \int_{|y| \geq |x|/2} \frac{\left( \frac{|y|}{|x|} \right)^2}{1+ \left( \frac{|y|}{|x|} \right)^2} \, \nu(x,dy)
		&= \int_{|y| \geq |x|/2} \int_{\mbb{R}^d} \left(1- \cos \frac{\eta \cdot y}{|x|} \right) g(\eta) \, d\eta \, \nu(x,dy) \\
		&\leq \int_{\mbb{R}^d} \re q \left( x,\frac{\eta}{|x|} \right) \, d\eta.
	\end{align*}
	Since \begin{equation*}
		|\psi(\xi)| \leq 2 \sup_{|\zeta| \leq 1} |\psi(\zeta)| (1+|\xi|^2), \qquad \xi \in \mbb{R}^d,
	\end{equation*}
	for any continuous negative definite function $\psi$, cf.\ \cite[Proposition 2.17d)]{ltp}, we get \begin{align*}
		\nu(x; \{y; |y| \geq |x|/2\})
		&\leq 10 \sup_{|\xi| \leq 1} \left| q \left(x, \frac{\xi}{|x|} \right) \right| \int_{\mbb{R}^d} (1+|\eta|^2) g(\eta) \, d\eta,
	\end{align*}
	and this gives \ref{p-3-iii} for $|x| \gg 1$. Next we prove \ref{p-3-ii}. First of all, we note that \begin{equation*}
		0 \leq \xi \cdot Q(x) \xi \leq \re q(x,\xi) \leq |q(x,\xi)|
	\end{equation*}
	and therefore $|Q(x)| \leq c (1+|x|^2)$ is a direct consequence of \eqref{lin-grow}. On the other hand, \begin{equation*}
		\inf_{|y| \leq |x|/2} \frac{1}{|x|^2+|y|^2} \geq \frac{4}{5} \frac{1}{|x|^2}
	\end{equation*}
	implies that \begin{align*}
		\frac{4}{5} \frac{1}{|x|^2} \int_{|y| \leq |x|/2} |y|^2 \, \nu(x,dy)
		\leq \int_{|y| \leq |x|/2} \frac{|y|^2}{|x|^2+|y|^2} \, \nu(x,dy)
		= \int_{|y| \leq |x|/2} \frac{\left( \frac{|y|}{|x|} \right)^2}{1+ \left( \frac{|y|}{|x|} \right)^2} \, \nu(x,dy).
	\end{align*}
	Using \eqref{p-eq7} and applying Tonelli's theorem once more, we find \begin{align*}
		\int_{|y| \leq |x|/2} |y|^2 \, \nu(x,dy)
		&\leq \frac{5}{4} |x|^2 \int_{\mbb{R}^d} \re q \left( x, \frac{\eta}{|x|} \right) g(\eta) \, d\eta.
	\end{align*}
	Hence, \begin{align*}
		\int_{|y| \leq |x|/2} |y|^2 \, \nu(x,dy)
		\leq \frac{5}{4} |x|^2 \sup_{|\xi| \leq 1} \left| q \left(x, \frac{\xi}{|x|} \right) \right| \int_{\mbb{R}^d} (1+|\eta|^2) g(\eta) \, d\eta
	\end{align*}
	and \ref{p-3-ii} follows. Finally, as \ref{p-3-ii} and \ref{p-3-iii} imply that \begin{equation*}
		\limsup_{|x| \to \infty} \sup_{|\xi| \leq |x|^{-1}} \left| q(x,\xi) - i \xi \cdot \left( b(x) + \int_{1 \leq |y| < |x|/2} y \, \nu(x,dy) \right) \right| < \infty,
	\end{equation*}
	see the first part of the proof, a straightforward application of the triangle inequality gives \begin{align*}
		\limsup_{|x| \to \infty} \sup_{|\xi| \leq |x|^{-1}} \left| i \xi \cdot \left( b(x) + \int_{1 \leq |y|< |x|/2} y \, \nu(x,dy) \right) \right| < \infty
	\end{align*}
	which proves \ref{p-3-i}.
\end{proof}

\begin{kor} \label{p-5}
	Let $A$ be a pseudo-differential operator with continuous negative definite symbol $q$ such that $q(\cdot,0)=0$. If $A$ maps $C_c^{\infty}(\mbb{R}^d)$ into $C_{\infty}(\mbb{R}^d)$ and $q$ satisfies the linear growth condition \eqref{lin-grow}, then there exists for any initial distribution $\mu$ a solution to the $(A,C_c^{\infty}(\mbb{R}^d))$-martingale problem which is conservative.
\end{kor}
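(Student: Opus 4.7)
The plan is to approximate $q$ by bounded-coefficient symbols $q_n$, invoke the classical existence theory in the bounded case, paste the resulting solutions via localization, and then use \eqref{lin-grow} to rule out explosion.

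Since $A(C_c^\infty(\mbb{R}^d))\subseteq C_\infty(\mbb{R}^d)$, Lemma \ref{map}\ref{map-iii} gives continuity of $x\mapsto q(x,\xi)$, and this yields local boundedness \eqref{loc-bdd}. Together with \eqref{lin-grow}, Lemma \ref{p-3} then supplies the global bounds \ref{p-3-i}--\ref{p-3-iii} on the characteristics $(b,Q,\nu)$. Pick a continuous retraction $\phi_n:\mbb{R}^d\to\overline{B(0,n)}$ (say $\phi_n(x)=x$ for $|x|\leq n$ and $\phi_n(x)=nx/|x|$ otherwise), and set $q_n(x,\xi):=q(\phi_n(x),\xi)$. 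Each $q_n$ is a continuous negative definite symbol with bounded characteristics, coincides with $q$ on $B(0,n)$, and the associated operator $A_n$ maps $C_c^\infty(\mbb{R}^d)$ into $C_\infty(\mbb{R}^d)$ by Lemma \ref{map}. For such bounded-coefficient symbols with the mapping property, existence of a conservative rich Feller process $X^{(n)}$ with symbol $q_n$ is standard (Hoh, Kolokoltsov, van Casteren); in particular, $X^{(n)}$ solves the $(A_n, C_c^\infty(\mbb{R}^d))$-martingale problem for any initial distribution $\mu$, since $C_c^\infty(\mbb{R}^d)\subseteq \dom(A_n)$ and $A_n f$ agrees with the pseudo-differential operator on $C_c^\infty$.

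Because $q_n = q$ on $B(0,k)$ whenever $n\geq k$, the law of $X^{(n)}$ stopped at its first exit time from $B(0,k)$ solves the stopped $(A,C_c^\infty(\mbb{R}^d))$-martingale problem and is independent of $n\geq k$. A Stroock--Varadhan-style localization argument (cf.\ \cite[Section 4.6]{ethier}) assembles these stopped laws into a solution $(X_t)_{t\geq 0}$ to the $(A,C_c^\infty(\mbb{R}^d))$-martingale problem on the one-point compactification $\mbb{R}^d_\Delta$, with explosion time $\zeta:=\lim_k\tau_k$ for $\tau_k:=\inf\{t:|X_t|\geq k\}$. It remains to show $\zeta=\infty$ almost surely. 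Take $V(x):=\log(2+|x|^2)$, for which $|\nabla V(x)|=O(|x|^{-1})$ and $|\nabla^2 V(x)|=O(|x|^{-2})$. Splitting the L\'evy-measure integral into small ($|y|<1$), medium ($1\leq|y|\leq|x|/2$) and large ($|y|>|x|/2$) jumps, using $V(x+y)\leq V(y)+\log 9$ on the large-jump regime (since $|x+y|\leq 3|y|$ there), and invoking Lemma \ref{p-3}\ref{p-3-i}--\ref{p-3-iii}, a routine calculation shows $|AV(x)|\leq C(1+V(x))$ uniformly in $x$. Since $V\notin C_c^\infty(\mbb{R}^d)$, one replaces $V$ by $C_c^\infty$-truncations $V_R$ with $V_R=V$ on $B(0,R)$ and $|AV_R|\leq C(1+V_R)$ uniformly in $R$, applies the martingale property of $M^{V_R}$ with optional stopping at $\tau_n\wedge\tau_R\wedge t$, and lets $R\to\infty$; Gronwall's lemma then gives $\mbb{E} V(X_{t\wedge\tau_n})\leq e^{Ct}(V(X_0)+Ct)$, hence $\log(2+n^2)\,\mbb{P}(\tau_n\leq t)\to 0$ as $n\to\infty$, i.\,e., $\zeta=\infty$ a.\,s.

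The main obstacle is the Lyapunov estimate $|AV|\leq C(1+V)$, especially the large-jump contribution where $V$ is unbounded on $\supp\nu(x,\cdot)$ and a careful separation into three jump regimes is needed; the uniform control of the $C_c^\infty$-cutoffs $V_R$ in the martingale identity is the second technical point. It is precisely at this step that Lemma \ref{p-3}, and hence \eqref{lin-grow}, is indispensable.
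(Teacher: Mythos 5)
Your overall architecture (produce a possibly exploding solution, then use \eqref{lin-grow} to rule out explosion) matches the paper's, but two of your key steps have genuine gaps.

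First, the existence step. You claim that for a bounded-coefficient symbol $q_n$ with $A_n(C_c^\infty(\mbb{R}^d))\subseteq C_\infty(\mbb{R}^d)$ the existence of a conservative \emph{rich Feller process} with symbol $q_n$ is ``standard (Hoh, Kolokoltsov, van Casteren)''. It is not: this is essentially van Casteren's claim, which the introduction of this very paper points out is false in general (cf.\ the counterexample in \cite[Example 2.27(ii)]{ltp}); Hoh's and Kolokoltsov's constructions need additional structural hypotheses on the symbol (smoothness in $\xi$, ellipticity-type bounds) that you do not have. Fortunately you do not need a Feller process here, only \emph{some} solution of the martingale problem, and for that the truncation detour is unnecessary: since $A$ satisfies the positive maximum principle and maps $C_c^\infty(\mbb{R}^d)$ into $C_\infty(\mbb{R}^d)$, Ethier--Kurtz \cite[Theorem 4.5.4]{ethier} applies directly to $A$ itself and yields an $\mbb{R}^d_\Delta$-valued solution for every starting point. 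This is exactly what the paper does.

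Second, and more seriously, the non-explosion step. The Lyapunov estimate $|AV(x)|\leq C(1+V(x))$ for $V(x)=\log(2+|x|^2)$ does \emph{not} follow from Lemma~\ref{p-3}. Your drift, diffusion and small/medium-jump terms are fine, but in the large-jump regime you need to control $\int_{|y|\geq |x|/2} V(x+y)\,\nu(x,dy)$, and after your bound $V(x+y)\leq V(y)+\log 9$ this requires $\int_{|y|\geq|x|/2}\log(2+|y|^2)\,\nu(x,dy)\leq C(1+V(x))$. Lemma~\ref{p-3}\ref{p-3-iii} only bounds the total mass $\nu(x,\{|y|\geq 1\vee|x|/2\})$, not any unbounded moment of the large jumps. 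Concretely, take $\nu(x,dy)=\delta_{z(x)}(dy)$ with $|z(x)|=e^{|x|^2}$: conditions \ref{p-3-i}--\ref{p-3-iii} and \eqref{lin-grow} hold, yet $AV(x)\approx 2|x|^2\gg V(x)$, so your Gronwall step collapses. (The process still does not explode, because the large jumps have uniformly bounded intensity $\lambda=\sup_x\nu(x,\{|y|\geq 1\vee|x|/2\})<\infty$ and hence only finitely many occur in finite time; but your argument does not see this.) The fix is the one the paper uses: split off the large jumps first and run the Lyapunov/Gronwall argument only for the symbol with characteristics $(b,Q,\nu_s)$, where $\nu_s$ carries the jumps of size $<1\vee|x|/2$ --- this is precisely the structure of Lemma~\ref{p-7} and of the non-explosion criterion \cite[Corollary 3.2]{change} that the paper invokes, which is stated in terms of the localized quantity $\lim_{r\to\infty}\sup_{|z-x|\leq 2r}\sup_{|\xi|\leq r^{-1}}|q(z,\xi)|$.
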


\begin{proof}
	Since $A(C_c^{\infty}(\mbb{R}^d)) \subseteq C_{\infty}(\mbb{R}^d)$ and $A$ satisfies the positive maximum principle, it follows from \cite[ Theorem 4.5.4]{ethier} that there exists an $\mbb{R}^d_{\Delta}$-valued solution to the $(A,C_c^{\infty}(\mbb{R}^d))$-martingale problem with initial distribution $\mu:=\delta_x$. By considering the probability measure induced by $(X_t)_{t \geq 0}$ on the Skorohod space $D([0,\infty),\mbb{R}^d_{\Delta})$, we may assume without loss of generality that $X_t(\omega) := \omega(t)$ is the canonical process on $\Omega := D([0,\infty),\mbb{R}^d_{\Delta})$. 	Lemma~\ref{p-3} implies that \begin{equation*}
		\lim_{r \to \infty} \sup_{|z-x| \leq 2r} \sup_{|\xi| \leq r^{-1}} |q(z,\xi)| < \infty \fa x \in \mbb{R}^d,
	\end{equation*}
	and therefore \cite[Corollary 3.2]{change} shows that the solution with initial distribution $\delta_x$ does not explode in finite time with probability $1$. By construction, see \cite[proof of Theorem 4.5.4]{ethier}, the mapping $x \mapsto \mbb{P}^x(B)$ is measurable for all $B \in \mc{F}_{\infty}^X := \sigma(X_t; t \geq 0)$. If we define \begin{equation*}
		\mbb{P}^{\mu}(B) := \int_{\mbb{R}^d} \mbb{P}^x(B) \, \mu(dx), \qquad B \in \mc{F}_{\infty}^X 
	\end{equation*}
	then $\mbb{P}^{\mu}$ gives rise to a conservative solution to the $(A,C_c^{\infty}(\mbb{R}^d))$-martingale problem with initial condition $\mu$.
\end{proof}
	
In Section~\ref{app} we will formulate Corollary~\ref{p-5} for solutions of stochastic differential equations, cf.\ Theorem~\ref{app-0}. The next result is an important step to prove Theorem~\ref{1.1}. 

\begin{lem} \label{p-7}
	Let $L$ be a pseudo-differential operator with continuous negative definite symbol $p$ and characteristics $(b,Q,\nu)$ such that $p(\cdot,0)=0$ and $L(C_c^{\infty}(\mbb{R}^d)) \subseteq C_{\infty}(\mbb{R}^d)$. Assume that $\nu(x, \{y \in \mbb{R}^d; |y| \geq |x|/2\}) = 0$ for $|x| \gg 1 $ and \begin{equation}
		\limsup_{|x| \to \infty} \sup_{|\xi| \leq |x|^{-1}} |p(x,\xi)| < \infty. \tag{G}
	\end{equation}
\begin{enumerate}
	\item\label{p-7-i} For any initial distribution $\mu$ there exists a probability measure $\mbb{P}^{\mu}$ on $D([0,\infty),\mbb{R}^d)$ such that the canonical process $(Y_t)_{t \geq 0}$ solves the $(L,C_c^{\infty}(\mbb{R}^d))$-martingale problem and \begin{equation}
		\mbb{P}^{\mu}(B) = \int \mbb{P}^x(B) \, \mu(dx) \fa B \in \mc{F}_{\infty}^Y := \sigma(Y_t; t \geq 0).  \label{p-eq8}
	\end{equation}
	\item\label{p-7-ii} For any $t \geq 0$, $R>0$ and $\eps>0$ there exist constants $\varrho>0$ and $\delta>0$ such that \begin{equation}
		\mbb{P}^{\mu} \left( \inf_{s \leq t} |Y_s| < R \right) \leq \eps \label{p-eq9}
	\end{equation}
	for any initial distribution $\mu$ such that $\mu(B(0,\varrho)) \leq \delta$.
	\item\label{p-7-iii} For any $t \geq 0$, $\eps>0$ and any compact set $K \subseteq \mbb{R}^d$ there exists $R>0$ such that \begin{equation}
		\mu(K^c) \leq \frac{\eps}{2} \implies \mbb{P}^{\mu} \left( \sup_{s \leq t} |Y_s| \geq R \right) \leq \eps. \label{p-eq10}
	\end{equation}
\end{enumerate} \end{lem}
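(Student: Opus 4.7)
The plan is to treat the three parts in order.

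For Part~\ref{p-7-i} I invoke Corollary~\ref{p-5}: its hypotheses are met, so it produces a conservative $\mathbb{R}^d_\Delta$-valued solution $\mathbb{P}^\mu$ to the $(L,C_c^\infty(\mathbb{R}^d))$-martingale problem together with the measurable family $x \mapsto \mathbb{P}^x$ and the disintegration \eqref{p-eq8}. Conservativeness prevents the canonical process from reaching the cemetery $\Delta$, so $\mathbb{P}^\mu$ is in fact supported on $D([0,\infty),\mathbb{R}^d)$.

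For Parts~\ref{p-7-ii} and~\ref{p-7-iii} the natural tool is a pair of Lyapunov functions: set $V(y) := (1+|y|^2)^{\alpha/2}$ for a small fixed $\alpha \in (0,1)$ and $W := 1/V$. Using the characteristic bounds of Lemma~\ref{p-3} (truncated drift $|\tilde b(y)| \leq c(1+|y|)$, diffusion and small-jump second moment bounded by $c(1+|y|^2)$) together with the vanishing $\nu(y,\{|z| \geq |y|/2\}) = 0$ for $|y| \gg 1$, a direct computation from the L\'evy--Khintchine representation yields
\[
|LV(y)| \leq c\, V(y) \quad\text{and}\quad |LW(y)| \leq c\, W(y) \fa y \in \mathbb{R}^d.
\]
The potentially dangerous large-jump part of the integral vanishes outright by the truncation, while the drift, diffusion, and small-jump parts all match the scaling of $V$ (resp.\ $W$) after Taylor expansion; the bounded-$|y|$ regime is handled by local boundedness of the symbol together with the strict positivity of $V, W$. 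Approximating $V, W$ by smooth compactly supported cut-offs $V_n, W_n$ agreeing with $V, W$ on $B(0,n)$, applying Dynkin's formula at the localizing stopping time $\tau_n := \inf\{s : |Y_s| \geq n\}$ and using Gronwall's inequality, and then sending $n \to \infty$ by Fatou (using Part~\ref{p-7-i} to guarantee $\tau_n \nearrow \infty$ almost surely), one obtains $\mathbb{E}^\mu[V(Y_t)\mid \mathcal{F}_s^Y] \leq e^{c(t-s)}V(Y_s)$ and likewise for $W$, so that $(e^{-cs}V(Y_s))_{s\geq 0}$ and $(e^{-cs}W(Y_s))_{s\geq 0}$ are nonnegative $\mathbb{P}^\mu$-supermartingales.

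For Part~\ref{p-7-ii}, optional stopping of $e^{-cs}W(Y_s)$ at $\tau_R := \inf\{s \geq 0 : |Y_s| \leq R\}$ combined with $W(Y_{\tau_R}) \geq (1+R^2)^{-\alpha/2}$ on $\{\tau_R \leq t\}$ gives
\[
\mathbb{P}^\mu(\tau_R \leq t) \leq e^{ct}(1+R^2)^{\alpha/2} \int W(x)\,\mu(dx) \leq e^{ct}(1+R^2)^{\alpha/2}\bigl(\mu(B(0,\varrho)) + (1+\varrho^2)^{-\alpha/2}\bigr),
\]
which yields \eqref{p-eq9} upon choosing $\varrho$ large and then $\delta$ small. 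For Part~\ref{p-7-iii}, Doob's maximal inequality for the supermartingale $e^{-cs}V(Y_s)$ gives $\mathbb{P}^x(\sup_{s \leq t}|Y_s| \geq R) \leq e^{ct}V(x)/V(R)$, whence via \eqref{p-eq8}
\[
\mathbb{P}^\mu\bigl(\sup_{s \leq t}|Y_s| \geq R\bigr) \leq \mu(K^c) + e^{ct}\,\frac{\sup_{x \in K} V(x)}{V(R)},
\]
which is $\leq \eps$ for $R$ sufficiently large.

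The hard part will be verifying the Lyapunov inequality $|LV| \leq cV$ (and its analogue for $W$): without the jump-truncation hypothesis the integral $\int (V(y+z)-V(y))\,\nu(y,dz)$ would not be controlled by $V(y)$, and it is precisely the growth condition \eqref{lin-grow} (through Lemma~\ref{p-3}) that ensures the remaining drift, diffusion, and small-jump contributions all have the right scaling in $V$ (resp.\ $W$). The approximation step $V_n \to V$ is then a standard localization based on the conservativeness established in Part~\ref{p-7-i}.
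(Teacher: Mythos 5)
Your proposal is correct and follows essentially the same route as the paper: part \ref{p-7-i} is read off from Corollary~\ref{p-5}, and parts \ref{p-7-ii}--\ref{p-7-iii} are proved with a pair of Lyapunov functions whose generator bounds $|LV|\leq cV$, $|LW|\leq cW$ come from Lemma~\ref{p-3} together with the jump-truncation hypothesis, followed by localization, Gronwall, and a Chebyshev-type estimate at a stopped time. The paper takes $u(x)=1/(1+|x|^2)$ and $v(x)=1+|x|^2$ in place of your $W$ and $V$ and concludes via Markov's inequality at $t\wedge\tau_R$ rather than via the supermartingale/Doob formulation, but these differences are cosmetic.
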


\begin{proof} 
	\ref{p-7-i} is a direct consequence of Corollary~\ref{p-5}; we have to prove \ref{p-7-ii} and \ref{p-7-iii}. To keep notation simple we show the result only in dimension $d=1$. Since $L$ maps $C_c^{\infty}(\mbb{R})$ into $C_{\infty}(\mbb{R})$, the symbol $p$ is locally bounded, cf.\ \cite[Proposition 2.27(d)]{ltp}, and therefore Lemma~\ref{p-3} shows that \ref{p-3}\ref{p-3-i}--\ref{p-3-iii} hold for all $x \in \mbb{R}$. Set $u(x) := 1/(1+|x|^2)$, $x \in \mbb{R}$, then \begin{equation}
		|u'(x)| \leq 2|x| u(x)^2 \qquad \text{and} \qquad |u''(x)| \leq 6u(x)^2 \fa x \in \mbb{R}. \label{p-eq11}
	\end{equation}
	Clearly, $|Lu(x)| \leq I_1+I_2$ where \begin{align*}	
		I_1 &:= \left| b(x) + \int_{1 \leq |y| < |x|/2} y \, \nu(x,dy) \right| \, |u'(x)| + \frac{1}{2} |Q(x)| \, |u''(x)| \\
		I_2 &:= \left| \int_{y<|x/2} (u(x+y)-u(x)-u'(x) y) \, \nu(x,dy) \right|
	\end{align*}
	for all $|x| \gg 1$. By Lemma~\ref{p-3} and \eqref{p-eq11} there exists a constant $c_1>0$ such that $I_1 \leq c_1 u(x)$ for all $x \in \mbb{R}$. On the other hand, Taylor's formula shows \begin{equation*}
		I_2 \leq \frac{1}{2} \int_{|y|<|x|/2} |y|^2 \, |u''(\zeta)| \, \nu(x,dy)
	\end{equation*}
	for some intermediate value $\zeta=\zeta(x,y)$ between $x$ and $x+y$. Since $|y| < |x|/2$, we have $|\zeta| \geq |x|/2$; hence, by \eqref{p-eq11}, \begin{equation*}
		|u''(\zeta)| \leq 6 u(\zeta)^2 \leq 24 u(x)^2.
	\end{equation*}
	Applying Lemma~\ref{p-3}, we find that there exists a constant $c_2>0$ such that \begin{equation*}
		I_2 \leq  24 u(x)^2 \int_{|y| < |x|/2} |y|^2 \, \nu(x,dy) \leq c_2 u(x).
	\end{equation*}
	Consequently, $|Lu(x)| \leq (c_1+c_2) u(x)$ for all $|x| \gg 1$. As $Lu$ is bounded and $u$ is bounded away from $0$ on compact sets, we can choose a constant $c_3>0$ such that \begin{equation}
		|Lu(x)| \leq c_3 u(x) \fa x \in \mbb{R}. \tag{$\star$} \label{p-star1}
	\end{equation}
	Define $\tau_R := \inf\{t \geq 0; |Y_t|<R\}$. Using a standard truncation and stopping technique it follows that \begin{equation*}
		\mbb{E}^{\mu} u(Y_{t \wedge \tau_R})-\mbb{E}^{\mu} u(Y_0) = \mbb{E}^{\mu} \left( \int_{(0,t \wedge \tau_R)} Lu(Y_s) \, ds \right).
	\end{equation*}
	Hence, by \eqref{p-star1}, \begin{equation*}
		\mbb{E}^{\mu} u(Y_{t \wedge \tau_R}) \leq \mbb{E}^{\mu} u(Y_0) + c_3 \mbb{E}^{\mu} \left( \int_{(0,t)} u(Y_{s \wedge \tau_R}) \, ds \right).
	\end{equation*}
	An application of Gronwall's inequality shows that there exists a constant $C>0$ such that \begin{equation*}
		\mbb{E}^{\mu} u(Y_{t \wedge \tau_R}) \leq e^{Ct} \mbb{E}^{\mu}u(Y_0) \fa t \geq 0.
	\end{equation*}
	By the Markov inequality, this implies that \begin{align*}
		\mbb{P}^{\mu} \left( \inf_{s \leq t} |Y_s|<R \right)
		\leq \mbb{P}^{\mu}(|Y_{t \wedge \tau_R}| \leq R)
		\leq \mbb{P}^{\mu}\big(u(Y_{t \wedge \tau_R}) \geq u(R) \big) 
		&\leq \frac{1}{u(R)} \mbb{E}^{\mu}u(Y_{t \wedge \tau_R}) \\
		&\leq \frac{1}{u(R)} e^{Ct} \mbb{E}^{\mu} u(Y_0).
	\end{align*}
	If $\mu$ is an initial distribution such that $\mu(B(0,\varrho)) \leq \delta$, then $\mbb{E}^{\mu} u(Y_0) \leq \delta + \varrho^{-2}$. Choosing $\varrho$ sufficiently large and $\delta>0$ sufficiently small, we get \eqref{p-eq9}.	The proof of \ref{p-3-iii} is similar. If we set $v(x) := x^2+1$, then there exists by Lemma~\ref{p-3} a constant $c>0$ such that $|Lv(x)| \leq c v(x)$ for all $x \in \mbb{R}$. Applying Gronwall's inequality another time, we find a constant $C>0$ such that \begin{equation*}
		\mbb{E}^{\mu} v(Y_{t \wedge \sigma_R}) \leq e^{Ct} \mbb{E}^{\mu}v(Y_0), \qquad t \geq 0,
	\end{equation*}
	where $\sigma_R := \inf\{t \geq 0; |Y_t| \geq R\}$ denotes the exit time from the ball $B(0,R)$. Hence, by the Markov inequality, \begin{equation*}
		\mbb{P}^{\mu} \left( \sup_{s \leq t} |Y_s| \geq R\right) \leq \mbb{P}^{\mu}\big(v(Y_{t \wedge \sigma_R}) \geq v(R) \big) \leq \frac{1}{v(R)} e^{Ct} \mbb{E}^{\mu}v(Y_0).
	\end{equation*}
	In particular we can choose for any compact set $K \subseteq \mbb{R}$ and any $\eps>0$ some $R>0$ such that \begin{equation*}
		\mbb{P}^{x} \left( \sup_{s \leq t} |Y_s|  \geq R \right) \leq \frac{\eps}{2} \fa x \in K.
	\end{equation*}
	Now if $\mu$ is an initial distribution such that $\mu(K^c) \leq \eps/2$, then, by \eqref{p-eq8}, \begin{align*}
		\mbb{P}^{\mu} \left( \sup_{s \leq t} |Y_s| \geq R\right)
		&= \int_K \mbb{P}^x \left( \sup_{s \leq t} |Y_s| \geq R \right) \, \mu(dx) + \int_{K^c} \mbb{P}^x \left( \sup_{s \leq t} |Y_s| \geq R \right) \, \mu(dx) \\
		&\leq \frac{\eps}{2} + \frac{\eps}{2}. \qedhere
	\end{align*}
\end{proof}

For the proof of Theorem~\ref{1.1} we will use the following result which follows e.\,g.\ from \cite[Theorem 4.1.16, Proof of Corollary 4.6.4]{jac3}.

\begin{lem} \label{p-9}
	Let $A$ be a pseudo-differential operator with negative definite symbol $q$ such that $A:C_c^{\infty}(\mbb{R}^d) \to C_b(\mbb{R}^d)$. If the $(A,C_c^{\infty}(\mbb{R}^d))$-martingale problem is well-posed and the unique solution $(X_t)_{t \geq 0}$ satisfies the compact containment condition \begin{equation*}
		\sup_{x \in K} \mbb{P}^x \left( \sup_{s \leq t} |X_s| \geq r \right) \xrightarrow[]{r \to \infty} 0
	\end{equation*}
	for any compact set $K \subseteq \mbb{R}^d$, then $x \mapsto \mbb{E}^x f(X_t)$ is continuous for all $f \in C_b(\mbb{R}^d)$.
\end{lem}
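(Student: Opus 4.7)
The plan is to show that $x_n \to x$ in $\mbb{R}^d$ implies $\mbb{P}^{x_n} \Rightarrow \mbb{P}^x$ on the Skorohod space $D([0,\infty),\mbb{R}^d)$, and then to deduce continuity of $T_t f(y) := \mbb{E}^y f(X_t)$ from this, first for $f \in C_c^{\infty}(\mbb{R}^d)$ and then for all $f \in C_b(\mbb{R}^d)$.

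\textbf{Step 1 (weak convergence).} The compact containment condition applied to the compact set $K := \{x\} \cup \{x_n : n \geq 1\}$ controls $\sup_{s \leq T} |X_s|$ uniformly in $n$ under $\mbb{P}^{x_n}$. Combined with the Aldous-type modulus-of-continuity estimate derived from the martingales $M_t^f = f(X_t) - f(X_0) - \int_0^t Af(X_s)\,ds$ for $f \in C_c^{\infty}$ (which is usable because $Af \in C_b$), this gives tightness of $\{\mbb{P}^{x_n}\}$ in $D([0,\infty),\mbb{R}^d)$. Any subsequential weak limit $\mbb{P}$ is supported on paths starting at $x$; since $f, Af \in C_b$, the standard stability of the martingale identity under Skorohod convergence shows that $\mbb{P}$ solves the $(A,C_c^{\infty}(\mbb{R}^d))$-martingale problem with initial distribution $\delta_x$. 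Well-posedness forces $\mbb{P} = \mbb{P}^x$, so the whole sequence converges weakly.

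\textbf{Step 2 (continuity of $T_t f$ for $f \in C_c^{\infty}$).} The martingale identity rewrites $T_t f(y) = f(y) + \mbb{E}^y \int_0^t Af(X_s)\,ds$. The functional $F(\omega) := \int_0^t Af(\omega(s))\,ds$ is bounded (as $Af \in C_b$) and continuous on $D([0,\infty),\mbb{R}^d)$: if $\omega_n \to \omega$ in the Skorohod topology, then $\omega_n(s) \to \omega(s)$ at every continuity point of $\omega$, hence for Lebesgue-a.e.\ $s$, and dominated convergence gives $F(\omega_n) \to F(\omega)$. The portmanteau theorem then yields $\mbb{E}^{x_n} F(X) \to \mbb{E}^x F(X)$; together with $f(x_n) \to f(x)$ this proves $T_t f(x_n) \to T_t f(x)$. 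Note that this circumvents the subtlety that $\omega \mapsto f(\omega(t))$ is only continuous at continuity points of $\omega$.

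\textbf{Step 3 (extension to $f \in C_b$).} Fix a compact set $K \subseteq \mbb{R}^d$. Choose cutoffs $\chi_R \in C_c^{\infty}$ with $0 \leq \chi_R \leq 1$ and $\chi_R \equiv 1$ on $B(0,R)$, and mollify $f \chi_R \in C_c(\mbb{R}^d)$ to obtain $f_R^\varepsilon \in C_c^{\infty}$ converging uniformly to $f\chi_R$ as $\varepsilon \to 0$. By Step 2, each $T_t f_R^\varepsilon$ is continuous; the contraction bound $\|T_t g\|_\infty \leq \|g\|_\infty$ lets me pass $\varepsilon \to 0$ to get continuity of $T_t(f \chi_R)$. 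The compact containment condition gives
\begin{equation*}
    \sup_{y \in K} \bigl| T_t f(y) - T_t(f \chi_R)(y) \bigr| \leq 2 \|f\|_\infty \sup_{y \in K} \mbb{P}^y\!\left( \sup_{s \leq t} |X_s| \geq R \right) \xrightarrow[R \to \infty]{} 0,
\end{equation*}
so $T_t f \vert_K$ is a uniform limit of continuous functions, hence continuous on $K$. Since $K$ is arbitrary, $T_t f \in C(\mbb{R}^d)$.

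The main obstacle is Step 1: the temporal tightness estimate and the passage of the martingale identity to a weak limit in $D$ need care, precisely because $\omega \mapsto f(\omega(t))$ need not be continuous at jump points of $\omega$. Both points are standard for martingale problems under the present hypotheses and are exactly the content of \cite[Theorem 4.1.16 and proof of Corollary 4.6.4]{jac3} cited with the statement.
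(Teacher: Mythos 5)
The paper offers no proof of Lemma~\ref{p-9}, delegating it entirely to \cite[Theorem 4.1.16, proof of Corollary 4.6.4]{jac3}, and your argument is a correct and complete rendering of exactly that standard scheme: tightness from compact containment plus the martingale estimates, identification of subsequential limits via well-posedness, and continuity of $T_tf$ first for $f\in C_c^{\infty}$ (your use of the identity $T_tf(y)=f(y)+\mbb{E}^y\int_0^t Af(X_s)\,ds$ to dodge the discontinuity of $\omega\mapsto f(\omega(t))$ is the right move) and then for $f\in C_b$ by cut-off, mollification and the contraction property. No gaps beyond the two points you explicitly flag as being the content of the cited theorems, which is the same level of detail the paper itself works at.
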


Now we are ready to prove Theorem~\ref{1.1}.

\begin{proof}[Proof of Theorem~\ref{1.1}]
	The well-posedness implies that the solution $(X_t)_{t \geq 0}$ is a Markov process, see e.\,g.\ \cite[Theorem 4.4.2]{ethier}, and by Corollary~\ref{p-5} the (unique) solution is conservative. In order to prove that $(X_t)_{t \geq 0}$ is a Feller process, we have to show that the semigroup $T_t f(x) := \mbb{E}^x f(X_t)$, $f \in C_{\infty}(\mbb{R}^d)$, has the following properties, cf.\ \cite[Lemma 1.4]{ltp}: \begin{enumerate}
		\item\label{i} continuity at $t=0$: $T_t f(x) \to f(x)$ as $t \to 0$ for any $x \in \mbb{R}^d$ and $f \in C_{\infty}(\mbb{R}^d)$.
		\item\label{ii} Feller property: \ $T_t (C_{\infty}(\mbb{R}^d)) \subseteq C_{\infty}(\mbb{R}^d)$ for all $t \geq 0$. 
	\end{enumerate}
	The first property is a direct consequence of the right-continuity of the sample paths and the dominated convergence theorem. Since we know that the martingale problem is well posed, it suffices to construct a solution to the martingale problem satisfying \ref{ii}. Write $\nu(x,dy) = \nu_s(x,dy)+\nu_l(x,dy)$ where \begin{align*}
		\nu_s(x,B) := \int_{|y|  < 1 \vee |x|/2} \I_B(y) \, \nu(x,dy) \qquad \qquad \nu_l(x,B) := \int_{|y| \geq 1 \vee |x|/2} \I_B(y) \, \nu(x,dy)
	\end{align*}
	are the small jumps and large jumps, respectively, and denote by $p$ the symbol with characteristics $(b,Q,\nu_s)$. By Corollary~\ref{p-5} there exists for any initial distribution $\mu$ a conservative solution to the $(p(x,D),C_c^{\infty}(\mbb{R}^d))$-martingale problem, and the solution satisfies \ref{p-7}\ref{p-7-ii} and \ref{p-7}\ref{p-7-iii}. Using the same reasoning as in \cite[proof of Proposition 4.10.2]{ethier} it is possible to show that we can use interlacing to construct a solution to the $(A,C_c^{\infty}(\mbb{R}^d))$-martingale problem with initial distribution $\mu=\delta_x$: \begin{equation*}
		X_t := \sum_{k \geq 0} Y_{t-\tau_k}^{(k)} \I_{[\tau_k,\tau_{k+1})}(t)
	\end{equation*}
	where \begin{itemize}
		\item $\tau_k := \inf\{t \geq 0; N_t = k\} =\sum_{j=1}^k \sigma_j$ are the jump times of a Poisson process $(N_t)_{t \geq 0}$ with intensity $\lambda := \sup_{z \in \mbb{R}^d} \nu_l(z,\mbb{R}^d \backslash \{0\})$, i.\,e.\ $\sigma_j \sim \Exp(\lambda)$ are independent and identically distributed. Note that $\lambda < \infty$ by Lemma~\ref{p-3}.
		\item $(Y^{(k,\mu_k)}_t)_{t \geq 0} := (Y^{(k)}_t)_{t \geq 0}$ is a solution to the $(p(x,D),C_c^{\infty}(\mbb{R}^d))$-martingale problem with initial distribution \begin{equation}
			\mu_k(B) := \frac{1}{\lambda} \mbb{E}^x \bigg( \int \I_B(z+y) \, \nu_l(z,dy) + (\lambda-\nu_l(z,\mbb{R}^d \backslash \{0\})) \delta_z(B) \bigg|_{z=Y_{\sigma_{k-1}-}^{(k-1)}} \bigg) \label{p-eq14}
		\end{equation} %der zweite term bedeutet, dass kein sprung eintritt... ist n\"otig weil Sprungintensit\"at als sup definiert
		for $k \geq 1$ and $\mu_0(dy) := \delta_x(dy)$. Moreover, $Y^{(k)}$ and $(\sigma_j)_{j \geq k+1}$ are independent for all $k \geq 0$.
		\item $\mbb{P}^x$ is a probability measure which depends on the initial distribution $\mu=\delta_x$ of $(X_t)_{t \geq 0}$.
	\end{itemize}
		Note that if we define a linear operator $P$ by \begin{equation}
			Pf(z) := \int f(z+y) \, \nu_l(z,dy) + (\lambda-\nu_l(z,\mbb{R}^d \backslash \{0\})) f(z), \qquad f \in C_{\infty}(\mbb{R}^d), \; z \in \mbb{R}^d \label{p-eq16}
	\end{equation}
	then \eqref{p-eq10} implies that \begin{equation}
		\mbb{E}^x f(Y_0^{(k)}) = \frac{1}{\lambda} \mbb{E}^x (Pf(Y_{\sigma_{k-1}-}^{(k-1)})) \fa f \in \mc{B}_b(\mbb{R}^d), k \geq 1. \label{p-eq14'}
	\end{equation}
	Before we proceed with the proof, let us give a remark on the construction of $(X_t)_{t \geq 0}$. The intensity of the Poisson process $(N_t)_{t \geq 0}$, which announces the ``large jumps'', is $\lambda = \sup_z \lambda(z)$ where $\lambda(z) := \nu_l(z,\mbb{R}^d \backslash \{0\})$ is the ``state-space dependent intensity'' of the large jumps. Roughly speaking the second term on the right-hand side of \eqref{p-eq14} is needed to thin out the large jumps; with probability $\lambda^{-1} \mbb{E}^x((\lambda-\lambda(Y_{\sigma_{k-1}-}^{(k-1)}))$ there is no large jump at time $\sigma_{k-1}$, and therefore the effective jump intensity at time $t=\sigma_{k-1}$ is $\lambda(Y_{\sigma_{k-1}-}^{(k-1)})$. \par \medskip
	We will prove that $(X_t)_{t \geq 0}$ has the Feller property. To this end, we first show that for any $t \geq 0$, $\eps>0$, $k \geq 1$ and any compact set $K \subseteq \mbb{R}^d$ there exists $R>0$ such that \begin{equation}
		\mbb{P}^x \left( \sup_{s \leq t} |Y_s^{(j,\mu_j)}| \geq R \right) \leq \eps \fa x \in K, j=0,\ldots,k; \label{p-eq145}
	\end{equation}
	we prove \eqref{p-eq145} by induction. Note that $\mu_j=\mu_j(x)$ depends on the initial distribution of $(X_t)_{t \geq 0}$. \begin{itemize}
		\item $k=0$: This is a direct consequence of Lemma~\ref{p-7}\ref{p-7-ii} since $\mu_0(dy) = \delta_x(dy)$.
		\item $k \to k+1$: Because of Lemma~\ref{p-7}\ref{p-7-ii} and the induction hypothesis, it suffices to show that there exists a compact set $C \subseteq \mbb{R}^d$ such that $\mbb{P}^x(Y_0^{(k+1,\mu_{k+1})} \notin C) \leq \eps/2$ for all $x \in K$. Choose $m \geq 0$ sufficiently large such that $\mbb{P}^x(\sigma_k \geq m) \leq \eps' := \eps/8$, and choose $R>0$ such that \eqref{p-eq145} holds with $\eps := \eps'$, $t:=m$. Then, by \eqref{p-eq14'} and our choice of $R$, 
		\begin{align*}
			\mbb{P}^x(|Y_0^{(k+1)}| \geq r) 
			&= \frac{1}{\lambda} \mbb{E}^x \left( (P\I_{\overline{B(0,r)}^c})(Y_{\sigma_k-}^{(k)}) \right) \\
			&\leq \epsilon' +  \frac{1}{\lambda} \mbb{E}^x \left( \I_{\{\sup_{s \leq m} |Y_s^{(k)}| \leq R\}} (P\I_{\overline{B(0,r)}^c})(Y_{\sigma_k-}^{(k)}) \right)
		\end{align*}
		which implies for $r>R$, $x \in K$
		 \begin{align*}
			&\quad \mbb{P}^x(|Y_0^{(k+1)}| \geq r) \\
			&\leq \eps' + \frac{1}{\lambda} \mbb{E}^x \left( \I_{\{\sup_{s \leq m} |Y_s^{(k)}| \leq R\}} \left[ \int \I_{B(0,r)^c}(Y_{\sigma_k-}^{(k)}+y)\, \nu_l(Y_{\sigma_k-}^{(k)},dy) + 2 \lambda \I_{B(0,r)^c}(Y_{\sigma_k-}^{(k)}) \right] \right) \\
			&\leq 3\eps' + \frac{1}{\lambda} \mbb{E}^x \left(\I_{\{\sup_{s \leq m} |Y_s^{(k)}| \leq R\}}  \int_0^m \!\! \int_{|y| \geq r-R} \, \nu(Y_{t-}^{(k)},dy) \, \mbb{P}_{\sigma_k}^x(dt) \right) \\
			&\leq 3\eps' + \frac{1}{\lambda} \sup_{|z| \leq R} \nu(z,B(0,r-R)^c).
		\end{align*}
		The second term on the right-hand side converges to $0$ as $r \to \infty$, cf.\  \cite[Theorem 4.4]{rs98} or \cite[Theorem A.1]{change}, and therefore we can choose $r>0$ sufficiently large such that $\mbb{P}^x(|Y_0^{(k+1)}| \geq r) \leq 4\eps' = \eps/2$ for all $x \in K$.
	\end{itemize}
	For fixed $\eps>0$ choose $k \geq 1$ such that $\mbb{P}^x(N_t \geq k+1) \leq \eps$. By definition of $(X_t)_{t \geq 0}$ and \eqref{p-eq145}, we get
	\begin{align*}
		\sup_{x \in K} \mbb{P}^x \left( \sup_{s \leq t} |X_s| \geq R\right) &\leq \sup_{x \in K} \mbb{P}^x \left( \bigcup_{j=0}^k \left\{ \sup_{s \leq t} \left|Y_s^{(j,\mu_j)}\right| \geq R \right\} \right) + \eps \leq (k+1) \eps.
	\end{align*}
	Thus, by Lemma~\ref{p-9}, $x \mapsto T_t f(x) = \mbb{E}^x f(X_t)$ is continuous for any $f \in C_{\infty}(\mbb{R}^d)$. It remains to show that $T_t f$ vanishes at infinity; to this end we will show that for any $r>0$, $\eps>0$ there exists a constant $M>0$ such that \begin{equation}
		\mbb{P}^x \left( \inf_{s \leq t} |X_s| <r  \right) \leq \eps \fa |x| \geq M.  \label{p-eq15}
	\end{equation}
	It follows from Lemma~\ref{p-3} and the very definition of $\lambda$ that $Pf$ defined in \eqref{p-eq16} is bounded and \begin{align*}
		|Pf(x)|
		&\leq \int_{|x+y| < r} |f(x+y)| \, \nu_l(x,dy)+ \int_{|x+y| \geq r} |f(x+y)| \, \nu_l(x,dy) + 2 \lambda |f(x)| \\
		&\leq \|f\|_{\infty} \nu(x,B(-x,r)) + \lambda \sup_{|z| \geq r} |f(z)| + 2 \lambda |f(x)| \\
		&\xrightarrow[]{|x| \to \infty} \lambda \sup_{|z| \geq r} |f(z)| \xrightarrow[]{r \to \infty} 0,
	\end{align*}
	i.\,e.\ $Pf$ vanishes at infinity for any $f \in C_{\infty}(\mbb{R}^d)$. We claim that for any $k \geq 0$, $\eps>0$, $t \geq 0$ and $r>0$ there exists a constant $M>0$ such that \begin{equation}
		\mbb{P}^x \left( \inf_{s \leq t} |Y_s^{(j,\mu_j)}|<r \right) \leq \eps \fa j=0,\ldots,k, |x| \geq M. \label{p-eq17}
	\end{equation}
	We prove \eqref{p-eq17} by induction. \begin{itemize}
		\item $k=0$: This follows from Lemma~\ref{p-7}\ref{p-7-ii} since $\mu_0(dy) = \delta_x(dy)$.
		\item $k \to k+1$: For fixed $r>0$ choose $\delta>0$ and $\varrho>0$ as in \ref{p-7}\ref{p-7-ii}. By \ref{p-7}\ref{p-7-ii} it suffices to show that there exists $M>0$ such that \begin{equation}
			\mu_{k+1}(B(0,\varrho)) \leq \delta \fa |x| \geq M. \label{p-star2} \tag{$\star$}
		\end{equation}
		(Note that $\mu_{k+1}=\mu_{k+1}(x)$ depends on the initial distribution of $(X_t)_{t \geq 0}$.) Pick a cut-off function $\chi \in C_c^{\infty}(\mbb{R}^d)$ such that $\I_{B(0,\varrho)} \leq \chi \leq \I_{B(0,\varrho+1)}$, then by \eqref{p-eq14}, \begin{align*}
			\mu_{k+1}(B(0,\varrho))
			\leq \mbb{E}^x \chi(Y_0^{(k+1,\mu_{k+1})})
			= \frac{1}{\lambda} \mbb{E}^x \big((P\chi)(Y_{\sigma_{k}-}^{(k,\mu_k)}) \big).
		\end{align*}
		If $\|P\chi\|_{\infty} =0$ this proves \eqref{p-star2}. If $\|P\chi\|_{\infty}>0$, then we can choose $m \geq 1$ such that $\mbb{P}^x(\sigma_1 \geq m) \leq \delta/(2\|P\chi\|_{\infty})$. Since $P\chi$ vanishes at infinity, we have $\sup_{|z| \geq R} |P\chi(z)| \leq \lambda \delta/4$ for $R>0$ sufficiently large. By the induction hypothesis, there exists $M>0$ such that \eqref{p-eq17} holds with $\eps := \lambda \delta/4$, $r:=R$ and $t := m$. Then \begin{align*}
			|\mbb{E}^x(P \chi)(Y_{s-}^{(k,\mu_k)})|
			&\leq \mbb{P}^x \left( |Y_{s-}^{(k,\mu_k)}|< R \right) \|P\chi\|_{\infty} + \sup_{|z| \geq R} |P \chi(z)| \leq \frac{1}{2} \lambda \delta
		\end{align*}
		for all $s \leq m$ and $|x| \geq M$, and therefore \begin{align*}
			\mu_{k+1}(B(0,\varrho))
			&= \frac{1}{\lambda} \mbb{E}^x (P\chi)(Y_{\sigma_{k}-}^{(k,\mu_k)}) \\
			&\leq \frac{1}{\lambda} \mbb{E}^x \left( \int_{(0,\infty)} P\chi(Y_{s-}^{(k,\mu_k)}) \, \mbb{P}^x_{\sigma_k}(ds) \right) \\
			&\leq \frac{\delta}{2} + \|P \chi\|_{\infty} \int_{(m,\infty)} \, \mbb{P}^x_{\sigma_1}(ds) \leq \delta.
		\end{align*}
	\end{itemize}
	For fixed $\eps>0$ and $t \geq 0$ choose $k \geq 1$ such that $\mbb{P}^x(N_t \geq k+1) \leq \eps$. Choose $M>0$ as in \eqref{p-eq17}, then \begin{equation*}
		\mbb{P}^x(|X_t| <R) \leq \mbb{P}^x \left( \bigcup_{j=0}^k \left\{ \inf_{s \leq t} |Y_s^{(j)}| < R \right\} \right) + \eps \leq 2\eps \fa |x| \geq M.
	\end{equation*}
	Consequently, we have shown that $(X_t)_{t \geq 0}$ is a Feller process. Since $(X_t)_{t \geq 0}$ solves the $(A,C_c^{\infty}(\mbb{R}^d))$-martingale problem, we have \begin{equation*}
		\mbb{E}^x u(X_{t \wedge \tau_r^x})-u(x) = \mbb{E}^x \left( \int_{(0,t \wedge \tau_r^x)} Au(X_s) \, ds \right), \qquad u \in C_c^{\infty}(\mbb{R}^d),
	\end{equation*}
	where $\tau_r^x := \inf\{t \geq 0; |X_t-x| \geq r\}$ denotes the exit time from the ball $B(x,r)$. Using that $A(C_c^{\infty}(\mbb{R}^d)) \subseteq C_{\infty}(\mbb{R}^d)$, it is not difficult to see that the generator of $(X_t)_{t \geq 0}$ is, when restricted to $C_c^{\infty}(\mbb{R}^d)$, a pseudo-differential operator with symbol $q$, see e.\,g.\ \cite[Proof of Theorem 3.5, Step 2]{sde} for details. This means that $(X_t)_{t \geq 0}$ is a rich Feller process with symbol $q$.
\end{proof}

\begin{proof}[Proof of Corollary~\ref{1.2}]
	By Corollary~\ref{p-5} there exists for any initial distribution $\mu$ a solution to the $(A,C_c^{\infty}(\mbb{R}^d))$-martingale problem, and by assumption the martingale problem for the pseudo-differential operator $A_k$ with symbol $q_k$ is well-posed. Therefore \cite[Theorem 5.3]{hoh}, see also \cite[Theorem 4.6.2]{ethier}, shows that the $(A,C_c^{\infty}(\mbb{R}^d))$-martingale problem is well-posed. Now the assertion follows from Theorem~\ref{1.1}.
\end{proof}

\section{Applications} \label{app}

In this section we apply our results to L\'evy-driven stochastic differential equations (SDEs) and stable-like processes. Corollary~\ref{p-5} gives the following general existence result for weak solutions to L\'evy-driven SDEs. 

\begin{thm} \label{app-0}
	Let $(L_t)_{t \geq 0}$ be a $k$-dimensional L\'evy process with characteristic exponent $\psi$ and L\'evy triplet $(b,Q,\nu)$. Let $\ell: \mbb{R}^d \to \mbb{R}^d$, $\sigma:\mbb{R}^d \to \mbb{R}^{d \times k}$ be continuous functions which grow at most linearly. If \begin{equation}
		\nu(\{y \in \mbb{R}^k; |\sigma(x) \cdot y+ x| \leq r\}) \xrightarrow[]{|x| \to \infty} 0 \fa r>0, \label{app-eq0}
	\end{equation}
	then the SDE \begin{equation}
		dX_t = \ell(X_{t-}) \, dt  + \sigma(X_{t-}) \, dL_t, \qquad X_0 \sim \mu \label{app-eq1}
	\end{equation}
	has for any initial distribution $\mu$ a weak solution $(X_t)_{t \geq 0}$ which is conservative.
\end{thm}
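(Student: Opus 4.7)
The plan is to reformulate the SDE as a martingale problem and invoke Corollary~\ref{p-5}. I would set
\[
q(x,\xi) := -i\ell(x)\cdot\xi + \psi(\sigma(x)^T \xi), \qquad x,\xi \in \mbb{R}^d,
\]
and let $A$ be the pseudo-differential operator with symbol $q$. By Kurtz's correspondence \cite{kurtz}, a weak solution of \eqref{app-eq1} with initial law $\mu$ is precisely a solution of the $(A, C_c^{\infty}(\mbb{R}^d))$-martingale problem with initial law $\mu$; hence it suffices to produce a conservative solution of the latter.

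First I would verify that $q$ fits the framework of Corollary~\ref{p-5}. The map $\xi \mapsto q(x,\xi)$ is continuous negative definite (a linear imaginary part plus the pullback of the CNDF $\psi$ along $\sigma(x)^T$), $q(\cdot,0)=0$, and $x\mapsto q(x,\xi)$ is continuous since $\ell,\sigma,\psi$ are, which also yields local boundedness. Reading off the Lévy-Khintchine representation by substituting $z=\sigma(x)y$ in the integral for $\psi(\sigma(x)^T\xi)$, the Lévy kernel of $q$ is the push-forward
\[
\tilde\nu(x, B) = \nu\big(\{y \in \mbb{R}^k \setminus \{0\} : \sigma(x) y \in B\}\big),
\]
so hypothesis \eqref{app-eq0} translates into
\[
\tilde\nu(x, B(-x, r)) = \nu(\{y : |\sigma(x) y + x| \leq r\}) \xrightarrow[]{|x|\to\infty} 0 \qquad \text{for every } r > 0,
\]
and Lemma~\ref{map}\ref{map-i} yields $A(C_c^{\infty}(\mbb{R}^d)) \subseteq C_{\infty}(\mbb{R}^d)$.

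To check the growth condition \eqref{lin-grow} I would combine the linear-growth bounds $|\ell(x)|,|\sigma(x)|\leq c(1+|x|)$ with the standard quadratic estimate $|\psi(\eta)|\leq 2\sup_{|\zeta|\leq 1}|\psi(\zeta)|(1+|\eta|^2)$ for continuous negative definite functions \cite[Proposition 2.17d)]{ltp}. For $|x|\geq 1$ and $|\xi|\leq |x|^{-1}$ both $|\ell(x)\cdot\xi|$ and $|\sigma(x)^T\xi|$ are bounded by constants independent of $x$, so $|q(x,\xi)|$ is uniformly bounded on this range and \eqref{lin-grow} follows. Corollary~\ref{p-5} then supplies a conservative solution of the $(A, C_c^{\infty}(\mbb{R}^d))$-martingale problem, and translating back through \cite{kurtz} produces the desired weak solution of \eqref{app-eq1}.

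The main (mild) obstacle is the bookkeeping in the Lévy-Khintchine representation: the mismatch between the cutoff $\I_{(0,1)}(|y|)$ in the triplet of $\psi$ and the cutoff $\I_{(0,1)}(|\sigma(x)y|)$ natural after the substitution is absorbed into an additional $x$-dependent drift term which inherits continuity from $\sigma$ and is therefore locally bounded; since this correction does not affect either the hypothesis of Lemma~\ref{map}\ref{map-i} or the linear growth estimate, it does not obstruct the application of Corollary~\ref{p-5}.
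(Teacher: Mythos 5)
Your proposal is correct and follows essentially the same route as the paper: define the symbol $q(x,\xi)=-i\ell(x)\cdot\xi+\psi(\sigma(x)^T\xi)$, use Lemma~\ref{map} together with \eqref{app-eq0} and the continuity of $\ell,\sigma$ to get $A(C_c^{\infty}(\mbb{R}^d))\subseteq C_{\infty}(\mbb{R}^d)$, verify \eqref{lin-grow} from the linear growth of $\ell$ and $\sigma$, apply Corollary~\ref{p-5}, and translate back via Kurtz's equivalence. Your additional bookkeeping (identifying the L\'evy kernel as a push-forward and handling the cutoff mismatch) is just a more explicit version of what the paper leaves implicit.
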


Note that \eqref{app-eq0} is, in particular, satisfied if \begin{equation*}
	\lim_{|x| \to \infty} \sup_{|\xi| \leq |x|^{-1}} |\re \psi(\sigma(x)^T \xi)| = 0,
\end{equation*}
e.\,g.\ if $\sigma$ is at most of sublinear growth, cf.\ Lemma~\ref{map}\ref{map-ii}.

\begin{proof}
	Denote by $A$ the pseudo-differential operator with symbol $q(x,\xi) :=-i \ell(x) \cdot \xi + \psi(\sigma(x)^T \xi)$. Since $q$ is locally bounded and $x \mapsto q(x,\xi)$ is continuous for all $\xi \in \mbb{R}^d$ it follows from \eqref{app-eq1} that $A(C_c^{\infty}(\mbb{R}^d)) \subseteq C_{\infty}(\mbb{R}^d)$, cf.\ Lemma~\ref{map}. Because $\ell$, $\sigma$ are at most of linear growth,  $q$ satisfies the growth condition \eqref{lin-grow}. Applying Corollary~\ref{p-5} we find that there exists a conservative solution $(X_t)_{t \geq 0}$ to the $(A,C_c^{\infty}(\mbb{R}^d))$-martingale problem. By \cite{kurtz}, $(X_t)_{t \geq 0}$ is a weak solution to the SDE \eqref{app-eq1}.
\end{proof}

For $\alpha \in (0,1]$ we denote by \begin{align*}
	\mc{C}^{\alpha}_{\loc}(\mbb{R}^d,\mbb{R}^n) &:= \left\{f: \mbb{R}^d \to \mbb{R}^n; \forall x \in \mbb{R}^d: \sup_{|y-x| \leq 1} \frac{|f(y)-f(x)|}{|y-x|^{\alpha}}<\infty \right\} \\
	\mc{C}^{\alpha}(\mbb{R}^d,\mbb{R}^n) &:= \left\{f: \mbb{R}^d \to \mbb{R}^n; \sup_{x \neq y} \frac{|f(y)-f(x)|}{|y-x|^{\alpha}}<\infty \right\}
\end{align*}
the space of (locally) H\"{o}lder continuous functions with H\"{o}lder exponent $\alpha$.

%todo locally h\"{o}lder continuous mit unterschiedlichen exponenten auf kompakten mengen?

\begin{thm} \label{app-1} 
	Let $(L_t)_{t \geq 0}$ be a $k$-dimensional L\'evy process with L\'evy triplet $(b,Q,\nu)$ and characteristic exponent $\psi$. Suppose that there exist $\alpha,\beta \in (0,1]$ such that the L\'evy-driven SDE \begin{equation*}
		dX_t = f(X_{t-}) \, dt+ g(X_{t-}) \, dL_t, \qquad X_0 \sim \mu 
	\end{equation*}
	has a unique weak solution for any initial distribution $\mu$ and any two bounded functions $f \in \mc{C}^{\alpha}(\mbb{R}^d,\mbb{R}^d)$ and $g \in \mc{C}^{\beta}(\mbb{R}^d,\mbb{R}^{d \times k})$ such that \begin{equation*}
		|g(x)^T \xi| \geq c |\xi|, \qquad \xi \in \mbb{R}^d, x \in \mbb{R}^d
	\end{equation*}
	for some constant $c>0$. Then the SDE \begin{equation*}
		dX_t = \ell(X_{t-}) \, dt+ \sigma(X_{t-}) \, dL_t, \qquad X_0 \sim \mu
	\end{equation*}
	has a unique weak solution for any $\ell \in C^{\alpha}_{\loc}(\mbb{R}^d,\mbb{R}^d)$, $\sigma \in C^{\beta}_{\loc}(\mbb{R}^d,\mbb{R}^{d \times k})$ which are at most of linear growth and satisfy 
	\begin{equation}
			\nu(\{y \in \mbb{R}^k; |\sigma(x) \cdot y + x| \leq r\}) \xrightarrow[]{|x| \to \infty} 0 \fa r>0 \label{app-eq5}
		\end{equation}
	and 
	\begin{equation}
		\forall n \in \mbb{N} \, \, \exists c_n>0 \, \, \forall |x| \leq n, \xi \in\mbb{R}^d: \, \, \,  |\sigma(x)^T \xi| \geq c_n |\xi|. \label{app-eq7}
	\end{equation}
	The unique weak solution is a conservative rich Feller process with symbol \begin{equation*}
		q(x,\xi) :=-i \ell(x) \cdot  \xi + \psi(\sigma(x)^T \xi), \qquad x,\xi \in \mbb{R}^d.
	\end{equation*}
\end{thm}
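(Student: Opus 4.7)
The plan is to apply the localization result Corollary~\ref{1.2} to the pseudo-differential operator $A$ with symbol $q(x,\xi) := -i\ell(x)\cdot\xi + \psi(\sigma(x)^T\xi)$. To this end I will construct, for each $k \geq 1$, a pair $(\ell_k,\sigma_k)$ of bounded H\"older continuous coefficients agreeing with $(\ell,\sigma)$ on $\{|x| \leq k\}$; the pseudo-differential operator $A_k$ with symbol $q_k(x,\xi) := -i\ell_k(x)\cdot\xi + \psi(\sigma_k(x)^T\xi)$ will then serve as the localizing operator required by Corollary~\ref{1.2}.

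Concretely, let $\pi_k\colon \mbb{R}^d \to \mbb{R}^d$ denote the radial projection $\pi_k(x) := x$ for $|x|\leq k$ and $\pi_k(x) := kx/|x|$ otherwise; this map is $1$-Lipschitz and takes values in the compact set $\{|x|\leq k\}$. Set $\ell_k := \ell\circ\pi_k$ and $\sigma_k := \sigma\circ\pi_k$. Continuity of $\ell,\sigma$ makes $\ell_k,\sigma_k$ bounded, and composing with the $1$-Lipschitz map $\pi_k$ the (local) H\"older continuity of $\ell,\sigma$ on the compact set $\{|x|\leq k\}$ gives $\ell_k \in \mc{C}^{\alpha}(\mbb{R}^d,\mbb{R}^d)$ and $\sigma_k \in \mc{C}^{\beta}(\mbb{R}^d,\mbb{R}^{d\times k})$. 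The non-degeneracy assumption \eqref{app-eq7} with $n=k$ yields $|\sigma_k(x)^T\xi| \geq c_k|\xi|$ uniformly in $x \in \mbb{R}^d$. The standing hypothesis of the theorem then produces a unique weak solution to $dX_t = \ell_k(X_{t-})\, dt + \sigma_k(X_{t-})\, dL_t$ for every initial distribution, and by \cite{kurtz} this is equivalent to well-posedness of the $(A_k,C_c^{\infty}(\mbb{R}^d))$-martingale problem. By construction $q_k(x,\xi) = q(x,\xi)$ for all $|x|<k$, $\xi \in \mbb{R}^d$.

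It remains to verify the other assumptions of Corollary~\ref{1.2}. For $A$: the mapping property $A(C_c^{\infty}(\mbb{R}^d)) \subseteq C_{\infty}(\mbb{R}^d)$ is immediate from \eqref{app-eq5} via Lemma~\ref{map}\ref{map-i}, since the L\'evy kernel of $q$ at $x$ is the push-forward of $\nu$ under $y\mapsto\sigma(x)y$; the growth condition \eqref{lin-grow} follows from the linear growth of $\ell,\sigma$ together with the estimate $|\psi(\eta)| \leq 2\sup_{|\zeta|\leq 1}|\psi(\zeta)|(1+|\eta|^2)$ applied with $\eta = \sigma(x)^T\xi$ and $|\xi|\leq|x|^{-1}$. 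For $A_k$: fix $f \in C_c^{\infty}(\mbb{R}^d)$ with $\supp f\subseteq B(0,R)$ and write $M_k := \|\sigma_k\|_{\infty}$. For $|x|>R$ the drift and diffusion parts of $A_k f(x)$ vanish, and the remaining jump integral simplifies to $\int f(x + \sigma_k(x)y)\,\nu(dy)$, whose modulus is bounded by $\|f\|_{\infty}\,\nu(\{y\in\mbb{R}^k\colon|y|\geq(|x|-R)/M_k\}) \to 0$ as $|x|\to\infty$; combined with the continuity in $x$ furnished by Lemma~\ref{map}\ref{map-iii}, this yields $A_k(C_c^{\infty}(\mbb{R}^d)) \subseteq C_{\infty}(\mbb{R}^d)$.

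Corollary~\ref{1.2} now asserts that the $(A,C_c^{\infty}(\mbb{R}^d))$-martingale problem is well-posed and its unique solution is a conservative rich Feller process with symbol $q$. A final appeal to \cite{kurtz} translates this back into the existence and uniqueness of a weak solution to the original SDE. The main delicate point I anticipate is the verification of $A_k(C_c^{\infty}(\mbb{R}^d)) \subseteq C_{\infty}(\mbb{R}^d)$: condition \eqref{app-eq5} is stated for the unbounded $\sigma$ and is not automatically inherited by the truncation $\sigma_k$; it is precisely the boundedness of $\sigma_k$ that saves the argument, forcing the jump landing site $x + \sigma_k(x)y$ to remain close to $x$ unless $|y|$ is comparable to $|x|$, in which regime $\nu$ carries little mass.
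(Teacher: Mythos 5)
Your proposal is correct and follows essentially the same route as the paper: radial truncation $\ell_k=\ell\circ\pi_k$, $\sigma_k=\sigma\circ\pi_k$, Kurtz's equivalence between weak uniqueness for the SDE and well-posedness of the $(A_k,C_c^{\infty}(\mbb{R}^d))$-martingale problem, verification of the mapping and growth hypotheses via Lemma~\ref{map}, and an appeal to Corollary~\ref{1.2}. The only cosmetic difference is that you check $A_kf\in C_{\infty}(\mbb{R}^d)$ by a direct estimate on the jump integral outside $\supp f$, whereas the paper notes that boundedness of $\sigma_k$ forces $\nu(\{y;|\sigma_k(x)y+x|\leq r\})\to 0$ and then cites Lemma~\ref{map} — the same observation in different clothing.
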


\begin{proof}
	Let $\ell \in \mc{C}^{\alpha}_{\loc}(\mbb{R}^d,\mbb{R}^d)$ and $\sigma \in \mc{C}^{\beta}_{\loc}(\mbb{R}^d,\mbb{R}^{d \times k})$ be two functions which grow at most linearly and satisfy \eqref{app-eq5}, \eqref{app-eq7}. Lemma~\ref{map} shows that the pseudo-differential operator $A$ with symbol $q$ satisfies $A(C_c^{\infty}(\mbb{R}^d)) \subseteq C_{\infty}(\mbb{R}^d)$. Moreover, since $\sigma$, $\ell$ are at most of linear growth, the growth condition \eqref{lin-grow} is clearly satisfied. Set \begin{equation*}
		\ell_k(x) := \begin{cases} \ell(x), & |x| <k  \\ \ell \left( k \frac{x}{|x|} \right), & |x| \geq k \end{cases} \quad \text{and} \quad \sigma_k(x) := \begin{cases} \sigma(x), &  |x|<k, \\ \sigma\left( k \frac{x}{|x|} \right), & |x| \geq k. \end{cases} 
	\end{equation*} 
	By assumption, the SDE \begin{equation*}
		dX_t = \ell_k(X_{t-}) \, dt + \sigma_k(X_{t-}) \, dL_t, \qquad X_0 \sim \mu,
	\end{equation*}
	has a unique weak solution for any initial distribution $\mu$ for all $k \geq 1$. By \cite{kurtz} (see also \cite[Lemma 3.3]{sde}) this implies that the $(A_k,C_c^{\infty}(\mbb{R}^d))$-martingale problem for the pseudo-differential operator with symbol $q_k(x,\xi) := -i \ell_k(x) \cdot  \xi + \psi(\sigma_k(x)^T \xi)$ is well-posed. Since $\sigma_k$ is bounded, we have \begin{equation*}
		\nu(\{y \in \mbb{R}^k; |\sigma_k(x) \cdot y +x| \leq r\}) \xrightarrow[]{|x| \to \infty} 0 \fa r>0, %folgt z.B. auch aus hinreichender wachstumsbedingung an das symbol
	\end{equation*}
	and therefore Lemma~\ref{map} shows that $A_k$ maps $C_c^{\infty}(\mbb{R}^d)$ into $C_{\infty}(\mbb{R}^d)$. Now the assertion follows from Corollary~\ref{1.2}.
\end{proof}

Applying Theorem~\ref{app-1} we obtain the following generalization of \cite[Corollary 4.7]{parametrix}, see also \cite[Theorem 5.23]{matters}. 

\begin{thm} \label{app-3}
	Let $(L_t)_{t \geq 0}$ be a one-dimensional L\'evy process such that its characteristic exponent $\psi$ satisfies the following conditions: \begin{enumerate}
		\item $\psi$ has a holomorphic extension $\Psi$ to \begin{equation*}
			U := \{z \in \mbb{C}; |\im z| < m\} \cup \{z \in \mbb{C} \backslash \{0\}; \arg z \in (-\vartheta,\vartheta) \cup (\pi-\vartheta,\pi+\vartheta)\}
		\end{equation*}
		for some $m \geq 0$ and $\vartheta \in (0,\pi/2)$.
		\begin{figure}[H]
							\begin{center}
								\includegraphics[scale=0.75]{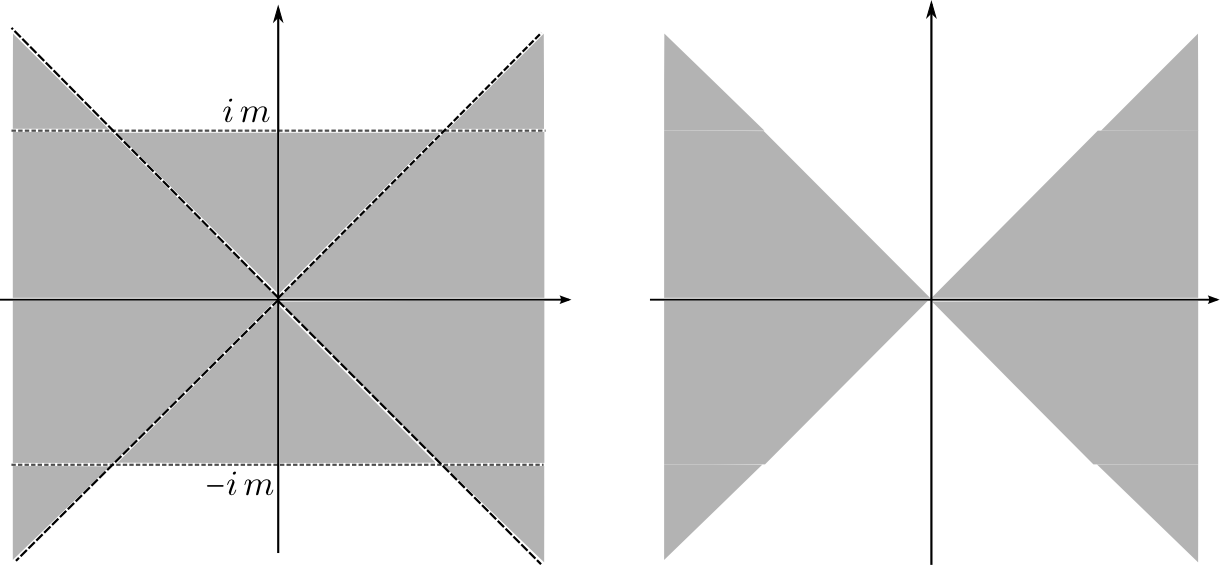}
							\end{center}
							\caption{The domain $U = U(m,\vartheta)$ for $m>0$ (left) and $m=0$ (right).}
							\label{fig:def_gebiet_exp}
		\end{figure} 
		\item There exist $\alpha \in (0,2]$, $\beta  \in (1,2)$ and constants $c_1,c_2>0$ such that \begin{equation*}
				\re \Psi(z) \geq c_1 |\re z|^{\beta} \fa z \in U, \; |z| \gg 1,
		\end{equation*}
		and \begin{equation*}
			|\Psi(z)| \leq c_2 (|z|^{\alpha} \I_{\{|z| \leq 1\}} + |z|^{\beta} \I_{\{|z|>1\}}), \qquad z \in U.
		\end{equation*}
		\item There exists a constant $c_3>0$ such that $|\Psi'(z)| \leq c_3 |z|^{\beta-1}$ for all $z \in U$, $|z| \gg 1$.
	\end{enumerate}
	Let $\ell: \mbb{R} \to \mbb{R}$ and $\sigma:\mbb{R} \to (0,\infty)$ be two locally H\"older continuous functions which grow at most linearly. If \begin{equation*}
		\nu(\{x; |\sigma(x)y+x| \leq r\}) \xrightarrow[]{|x| \to \infty} 0 \fa r>0,
	\end{equation*}
	 then the SDE \begin{equation*}
		dX_t = \ell(X_{t-}) \, dt + \sigma(X_{t-}) \, dL_t, \qquad X_0 \sim \mu,
	\end{equation*}
	has a unique weak solution for any initial distribution $\mu$. The unique solution is a conservative rich Feller process with symbol $q(x,\xi) := -i\ell(x) \xi + \psi(\sigma(x) \xi)$.
\end{thm}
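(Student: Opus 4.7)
The plan is to invoke Theorem~\ref{app-1} with the base well-posedness input coming from the parametrix construction of \cite{parametrix}. The point of hypotheses (i)--(iii) on $\psi$ is precisely to make the parametrix series for the SDE converge and yield a classical transition density, which in turn gives weak uniqueness for the SDE with bounded H\"older continuous coefficients bounded away from zero; this is the content of \cite[Corollary 4.7]{parametrix}. I would treat that result as a black box and feed it into the localization scheme of Theorem~\ref{app-1}.

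First, I would verify the black-box hypothesis of Theorem~\ref{app-1}: for any bounded $f \in \mc{C}^{\alpha}(\mbb{R},\mbb{R})$ and $g \in \mc{C}^{\beta}(\mbb{R},\mbb{R})$ with $|g(x)\xi| \geq c|\xi|$, the SDE $dX_t = f(X_{t-})\,dt + g(X_{t-})\,dL_t$ admits a unique weak solution for any initial distribution. Under conditions (i)--(iii) on $\psi$, this is exactly \cite[Corollary 4.7]{parametrix}, in which the H\"older exponents $\alpha,\beta \in (0,1]$ may be chosen to match the local H\"older exponents of $\ell$ and $\sigma$.

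Second, I would check the remaining structural hypotheses of Theorem~\ref{app-1} for our particular $\ell, \sigma$. Local H\"older continuity, at most linear growth and the tail condition \eqref{app-eq5} are directly assumed. The local non-degeneracy \eqref{app-eq7} --- that for each $n \in \mbb{N}$ there is a constant $c_n > 0$ with $|\sigma(x)\xi| \geq c_n |\xi|$ for $|x| \leq n$ --- follows from the continuity and strict positivity of $\sigma$: on the compact set $\{|x| \leq n\}$ the continuous positive function $\sigma$ attains a positive minimum, and this minimum may be taken as $c_n$.

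Finally, Theorem~\ref{app-1} yields a unique weak solution and identifies it as a conservative rich Feller process with symbol $q(x,\xi) = -i\ell(x)\xi + \psi(\sigma(x)\xi)$. Rather than a genuine obstacle, the only delicate point is exponent bookkeeping: since \cite[Corollary 4.7]{parametrix} is available for arbitrary H\"older exponents in $(0,1]$ (under the $\psi$-hypotheses), the exponents supplied by the local H\"older regularity of $\ell$ and $\sigma$ feed without modification into the hypothesis of Theorem~\ref{app-1}, and no further interpolation or regularization is needed.
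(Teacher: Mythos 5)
Your proposal is correct and follows essentially the same route as the paper: the paper's proof is a one-line appeal to \cite[Corollary 4.7]{parametrix} to verify the well-posedness hypothesis of Theorem~\ref{app-1}, which is exactly your black-box step. Your additional verifications (that \eqref{app-eq7} follows from continuity and strict positivity of $\sigma$ on compacts, and that the remaining hypotheses are directly assumed) are correct and merely make explicit what the paper leaves implicit.
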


\begin{proof}
	\cite[Corollary 4.7]{parametrix} shows that the assumptions of Theorem~\ref{app-1} are satisfied, and this proves the assertion.
\end{proof}

Theorem~\ref{app-3} applies, for instance, to L\'evy processes with the following characteristic exponents:  \begin{enumerate}
	\item (isotropic stable) $\psi(\xi) = |\xi|^{\alpha}$, $\xi \in \mbb{R}$, $\alpha \in (1,2]$,
	\item (relativistic stable) $\psi(\xi) = (|\xi|^2+\varrho^2)^{\alpha/2}-\varrho^{\alpha}$, $\xi \in \mbb{R}$, $\varrho>0$, $ \alpha \in (1,2)$,
	\item (Lamperti stable) $\psi(\xi) = (|\xi|^2+\varrho)_{\alpha}-(\varrho)_{\alpha}$, $\xi \in \mbb{R}$, $\varrho>0$, $\alpha \in (1/2,1)$, where $(r)_{\alpha} := \Gamma(r+\alpha)/\Gamma(r)$ denotes the Pochhammer symbol,
	\item (truncated L\'evy process) $\psi(\xi) = (|\xi|^2+\varrho^2)^{\alpha/2} \cos(\alpha \arctan(\varrho^{-1} |\xi|))-\varrho^{\alpha}$, $\xi \in \mbb{R}$, $\alpha \in (1,2)$, $\varrho>0$,
	\item (normal tempered stable) $\psi(\xi) = (\kappa^2+(\xi-ib)^2)^{\alpha/2}-(\kappa^2-b^2)^{\alpha/2}$, $\xi \in \mbb{R}$, $\alpha \in (1,2)$, $b>0$, $|\kappa|>|b|$.
\end{enumerate}
For further examples of L\'evy processes satisfying the assumptions of Theorem~\ref{app-3} we refer to \cite{parametrix,matters}. \par \medskip

We close this section with two further applications of Corollary~\ref{1.2}. The first is an existence result for Feller processes with symbols of the form $p(x,\xi)= \varphi(x) q(x,\xi)$. Recall that $p(x,D)$ denotes the pseudo-differential operator with symbol $p$.

\begin{thm} \label{app-5}
	Let $A$ be a pseudo-differential operator with symbol $q$ such that $q(\cdot,0)=0$, $A(C_c^{\infty}(\mbb{R}^d)) \subseteq C_{\infty}(\mbb{R}^d)$ and \begin{equation*}
		\lim_{|x| \to \infty} \sup_{|\xi| \leq |x|^{-1}} |q(x,\xi)| < \infty.% \tag{G}
	\end{equation*}
	 Assume that for any continuous bounded function $\sigma: \mbb{R}^d \to (0,\infty)$ the $(\sigma(x) q(x,D),C_c^{\infty}(\mbb{R}^d))$-martingale problem for the pseudo-differential operator with symbol $\sigma(x) q(x,\xi)$ is well-posed. If $\varphi: \mbb{R}^d \to (0,\infty)$ is a continuous function such that \begin{equation}
		 \lim_{|x| \to \infty} \sup_{|\xi| \leq |x|^{-1}} \big( \varphi(x) |q(x,\xi)| \big)<\infty, \label{app-eq11}
		\end{equation}
		and \begin{equation}
			\varphi(x) \nu(x,B(-x,r)) \xrightarrow[]{|x| \to \infty} 0 \fa r>0, \label{app-eq13}
		\end{equation}
		then there exists a conservative rich Feller process $(X_t)_{t \geq 0}$ with symbol $p(x,\xi) := \varphi(x) q(x,\xi)$ and $(X_t)_{t \geq 0}$ is the unique solution to the $(p(x,D),C_c^{\infty}(\mbb{R}^d))$-martingale problem.
\end{thm}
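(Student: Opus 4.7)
The strategy is to apply Corollary~\ref{1.2} to the pseudo-differential operator $P = p(x,D)$ with symbol $p(x,\xi) = \varphi(x) q(x,\xi)$. The hypothesis \eqref{app-eq11} is precisely the growth condition \eqref{lin-grow} for $p$; and since $\varphi(x)>0$, each $p(x,\cdot)$ is continuous negative definite (a positive scalar multiple of the continuous negative definite function $q(x,\cdot)$), with $p(\cdot,0)=0$. Moreover, the continuity of $\varphi$ together with the continuity of $x\mapsto q(x,\xi)$ (which we get from Lemma~\ref{map}\ref{map-iii} applied to $A$, since $A(C_c^\infty)\subseteq C_\infty \subseteq C$) shows that $x\mapsto p(x,\xi)$ is continuous.

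Next I would verify $P(C_c^\infty(\mbb{R}^d)) \subseteq C_\infty(\mbb{R}^d)$ by Lemma~\ref{map}\ref{map-i}. The characteristics of $p$ are $(\varphi b, \varphi Q, \varphi \nu)$, so the required condition becomes
\begin{equation*}
    \lim_{|x|\to\infty} \varphi(x)\, \nu(x, B(-x,r)) = 0 \fa r>0,
\end{equation*}
which is exactly the hypothesis \eqref{app-eq13}. Local boundedness of $p$ (needed for Lemma~\ref{map}) follows from continuity of $\varphi$ combined with the local boundedness of $q$ (which holds because $A(C_c^\infty)\subseteq C_\infty$, cf.\ \cite[Proposition 2.27(d)]{ltp}).

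It remains to produce the sequence of approximating symbols $p_k$ required by Corollary~\ref{1.2}. The natural choice is to freeze $\varphi$ outside the ball of radius $k$: define
\begin{equation*}
    \sigma_k(x) := \begin{cases} \varphi(x), & |x|\leq k, \\ \varphi\!\left(k\,\tfrac{x}{|x|}\right), & |x|>k,\end{cases}
\end{equation*}
which is continuous, bounded (by $\max_{|x|\leq k}\varphi$), and strictly positive. Set $p_k(x,\xi) := \sigma_k(x) q(x,\xi)$; then $p_k = p$ on $\{|x|<k\}$, and by hypothesis the $(\sigma_k(x) q(x,D), C_c^\infty(\mbb{R}^d))$-martingale problem is well-posed. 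To apply Corollary~\ref{1.2} we must also check that $P_k := p_k(x,D)$ maps $C_c^\infty$ into $C_\infty$. Again invoking Lemma~\ref{map}\ref{map-i}, the Lévy measure of $p_k$ is $\sigma_k(x)\nu(x,dy)$; since $\sigma_k$ is bounded and $\nu(x,B(-x,r))\to 0$ as $|x|\to\infty$ (this uses $A(C_c^\infty)\subseteq C_\infty$ and Lemma~\ref{map}\ref{map-i} applied to $A$), we obtain $\sigma_k(x)\nu(x,B(-x,r))\to 0$. All the hypotheses of Corollary~\ref{1.2} are in place and the conclusion follows.

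The only subtle point is the verification that the mapping property transfers correctly between $A$, $P$, and $P_k$; there is no hard analysis here, just careful bookkeeping with Lemma~\ref{map} and the explicit form of the Lévy measure of the product symbol $\varphi(x) q(x,\xi)$. The two explicit hypotheses \eqref{app-eq11} and \eqref{app-eq13} have been designed precisely so that the growth condition \eqref{lin-grow} and the ``vanishing at infinity of the large-jump mass around $-x$'' condition hold for $p$; no additional estimates are needed.
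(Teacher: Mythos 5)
Your proof is correct and follows essentially the same route as the paper: the identical localization $\sigma_k(x)=\varphi(x)\I_{B(0,k)}(x)+\varphi(k x/|x|)\I_{B(0,k)^c}(x)$, the same use of Lemma~\ref{map} together with \eqref{app-eq13} (resp.\ the boundedness of $\sigma_k$) to get the mapping properties of $p(x,D)$ and $p_k(x,D)$, and the same final appeal to Corollary~\ref{1.2}. Your extra bookkeeping (continuity of $x\mapsto p(x,\xi)$, local boundedness via the mapping property of $A$) only makes explicit what the paper leaves implicit.
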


Theorem~\ref{app-5} is more general than \cite[Theorem 4.6]{change}. \emph{Indeed:} If there exists a rich Feller process $(X_t)_{t \geq 0}$ with symbol $q$ and $C_c^{\infty}(\mbb{R}^d)$ is a core for the infinitesimal generator of $(X_t)_{t \geq 0}$, then, by \cite[Theorem 4.2]{ltp}, there exists for any continuous bounded function $\sigma>0$ a rich Feller process with symbol $\sigma(x) q(x,\xi)$ and core $C_c^{\infty}(\mbb{R}^d)$, and therefore the $(\sigma(x) q(x,D),C_c^{\infty}(\mbb{R}^d))$-martingale problem is well-posed, cf.\ \cite[Theorem 4.10.3]{kol}.
%time change f\"ur bdd hat wieder den core; damit MP well posed

\begin{proof}[Proof of Theorem~\ref{app-5}]
	For given $\varphi$ define \begin{equation*}
		\varphi_k(x) := \varphi(x) \I_{B(0,k)}(x) + \varphi \left( k \frac{x}{|x|} \right) \I_{B(0,k)^c}(x).
	\end{equation*}
	By assumption, the $(\varphi_k(x) q(x,D),C_c^{\infty}(\mbb{R}^d))$-martingale problem is well-posed. Moreover, it follows from the boundedness of $\varphi_k$ and the fact that $q(x,D)(C_c^{\infty}(\mbb{R}^d)) \subseteq C_{\infty}(\mbb{R}^d)$ that $\varphi_k(x) q(x,D)$ maps $C_c^{\infty}(\mbb{R}^d)$ into $C_{\infty}(\mbb{R}^d)$. On the other hand, \eqref{app-eq13} gives $p(x,D)(C_c^{\infty}(\mbb{R}^d)) \subseteq C_{\infty}(\mbb{R}^d)$, cf.\ Lemma~\ref{map}. Applying Corollary~\ref{1.2} proves the assertion.
\end{proof}

\begin{bsp} \label{app-7}
	Let $\varphi: \mbb{R}^d \to (0,\infty)$ be a continuous fuction and $\alpha: \mbb{R}^d \to (0,2]$ a locally H\"{o}lder continuous function. If there exists a constant $c>0$ such that $\varphi(x) \leq c(1+|x|^{\alpha(x)})$ for all $x \in \mbb{R}^d$, then there exists a conservative rich Feller process $(X_t)_{t \geq 0}$ with symbol \begin{equation*}
		p(x,\xi) := \varphi(x) |\xi|^{\alpha(x)}, \qquad x,\xi \in \mbb{R}^d,
	\end{equation*}
	and $(X_t)_{t \geq 0}$ is the unique solution to the $(p(x,D),C_c^{\infty}(\mbb{R}^d))$-martingale problem. \par \medskip
	
	\emph{Indeed:} If we set \begin{equation*}
		\alpha_j(x) := \alpha(x) \I_{B(0,j)}(x) + \alpha \left( j \frac{x}{|x|} \right) \I_{B(0,j)^c}(x),
	\end{equation*}
	then \cite[Theorem 5.2]{diss} shows that there exists a rich Feller process with symbol $q_j(x,\xi) := |\xi|^{\alpha_j(x)}(x)$, and that $C_c^{\infty}(\mbb{R}^d)$ is a core for the generator. By \cite[Theorem 4.2]{ltp}, there exists for any continuous bounded function $\sigma>0$ a rich Feller process with symbol $\sigma(x) q_j(x,\xi)$ and core $C_c^{\infty}(\mbb{R}^d)$. This implies that the $(\sigma(x) q_j(x,D),C_c^{\infty}(\mbb{R}^d))$-martingale problem is well posed, see e.\,g.\ \cite[Theorem 4.10.3]{kol} or \cite[Theorem 1.37]{diss}. Applying Theorem~\ref{app-5} we find that there exists a conservative rich Feller process with symbol $p_j(x,\xi) := \varphi(x) q_j(x,\xi)$, and that the $(p_j(x,D),C_c^{\infty}(\mbb{R}^d))$-martingale problem is well-posed. Now the assertion follows from Corollary~\ref{1.2}.
\end{bsp}

Example~\ref{app-7} shows that Corollary~\ref{1.2} is useful to establish the existence of stable-like processes with unbounded coefficients. For relativistic stable-like processes we obtain the following general existence result.

\begin{thm} \label{app-9}
	Let $\alpha: \mbb{R}^d \to (0,2]$, $m: \mbb{R}^d \to (0,\infty)$ and $\kappa: \mbb{R}^d \to (0,\infty)$ be locally H\"older continuous functions. If \begin{equation}
		\sup_{|x| \geq 1} \frac{\kappa(x)}{|x|^2 m(x)^{2-\alpha(x)}} < \infty \label{app-eq17}
	\end{equation}
	and \begin{equation}
		\kappa(x) m(x) e^{-|x| m(x)/4} \xrightarrow[]{|x| \to \infty} 0, \label{app-eq19}
	\end{equation}
	then there exists a conservative rich Feller process $(X_t)_{t \geq 0}$ with symbol \begin{equation*}
		q(x,\xi) := \kappa(x) \left[ (|\xi|^2 + m(x)^2)^{\alpha(x)/2} -m(x)^{\alpha(x)} \right], \qquad x,\xi \in \mbb{R}^d,
	\end{equation*}
	and $(X_t)_{t \geq 0}$ is the unique solution to the $(q(x,D),C_c^{\infty}(\mbb{R}^d))$-martingale problem.
\end{thm}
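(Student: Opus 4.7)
The strategy is to apply Corollary~\ref{1.2}. Write $q(x,\xi)=\kappa(x)\tilde q(x,\xi)$ with $\tilde q(x,\xi):=(|\xi|^2+m(x)^2)^{\alpha(x)/2}-m(x)^{\alpha(x)}$, and define the truncations
\[
\alpha_k(x):=\alpha(x)\I_{B(0,k)}(x)+\alpha\left(k\frac{x}{|x|}\right)\I_{B(0,k)^c}(x),
\]
and analogously $m_k$ and $\kappa_k$. Each of $\alpha_k,m_k,\kappa_k$ is bounded, locally Hölder continuous, and (for $m_k,\kappa_k$) bounded away from zero; let $\tilde q_k$ and $q_k:=\kappa_k\tilde q_k$ denote the corresponding symbols. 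Since $q_k\equiv q$ on $B(0,k)\times\mbb{R}^d$, Corollary~\ref{1.2} reduces the theorem to (i) checking its global hypotheses for $q$ itself, and (ii) showing well-posedness of the $(q_k(x,D),C_c^{\infty}(\mbb{R}^d))$-martingale problem for every $k$.

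For (ii), the bounded-coefficient relativistic stable-like operator with symbol $\tilde q_k$ is precisely the type covered by the parametrix construction behind Theorem~\ref{app-3} (see \cite[Corollary 4.7]{parametrix} in $d=1$ and its multidimensional extension in \cite{parametrix,matters}): it produces a rich Feller process for which $C_c^{\infty}(\mbb{R}^d)$ is a core of the generator. By \cite[Theorem 4.2]{ltp}, for every continuous bounded $\sigma:\mbb{R}^d\to(0,\infty)$ there is then a rich Feller process with symbol $\sigma(x)\tilde q_k(x,\xi)$ and the same core, so the $(\sigma(x)\tilde q_k(x,D),C_c^{\infty}(\mbb{R}^d))$-martingale problem is well-posed, cf.\ \cite[Theorem 4.10.3]{kol}. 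Theorem~\ref{app-5}, applied with multiplier $\varphi=\kappa_k$, then yields well-posedness of the $(q_k(x,D),C_c^{\infty}(\mbb{R}^d))$-martingale problem; its hypotheses are all trivial because the truncated coefficients are bounded.

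For (i), $q(\cdot,0)=0$ is immediate. The growth condition \eqref{lin-grow} follows from the pointwise bound $(|\xi|^2+m^2)^{\alpha/2}-m^\alpha\le m^{\alpha-2}|\xi|^2$, valid for every $\alpha\in(0,2]$ since $t\mapsto(t+m^2)^{\alpha/2}$ has derivative bounded by $(\alpha/2)m^{\alpha-2}$; multiplying by $\kappa(x)$ and setting $|\xi|\le|x|^{-1}$ gives $|q(x,\xi)|\le\kappa(x)m(x)^{\alpha(x)-2}|x|^{-2}$, which is bounded for $|x|\gg 1$ by \eqref{app-eq17}. For the mapping property $q(x,D)(C_c^{\infty}(\mbb{R}^d))\subseteq C_{\infty}(\mbb{R}^d)$ I would invoke Lemma~\ref{map}\ref{map-i}: the Lévy measure $\nu(x,\cdot)=\kappa(x)\nu^{\mathrm{rel}}_{\alpha(x),m(x)}$ has the explicit Bessel-function density of the relativistic stable Lévy measure, with tail $\sim e^{-m|y|}$ at infinity; splitting $e^{-m|y|/2}\cdot e^{-m|y|/2}$ to absorb the polynomial prefactors and using $|y|\ge|x|/2$ on $B(-x,r)$ for $|x|\gg r$, one obtains $\nu(x,B(-x,r))\le C_r\,\kappa(x)m(x)e^{-m(x)|x|/4}$, which tends to $0$ by \eqref{app-eq19}. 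Corollary~\ref{1.2} then produces the desired conservative rich Feller process with symbol $q$.

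The step I expect to require the most care is (ii) in dimension $d\ge 2$: the existence of a bounded-coefficient relativistic stable-like process with $C_c^{\infty}(\mbb{R}^d)$ as a core of its generator, since Theorem~\ref{app-3} supplies this only in dimension one and the multidimensional parametrix has to be cited or adapted. A secondary, purely computational nuisance is tracking the explicit dependence of the Bessel-function tail constants on $\alpha(x)$ and $m(x)$ so that the weak decay in \eqref{app-eq19} suffices, rather than some stronger variant.
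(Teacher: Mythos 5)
Your proposal is correct and follows essentially the same route as the paper: truncate $\alpha$, $m$, $\kappa$ outside $B(0,k)$, then verify the hypotheses of Corollary~\ref{1.2} using the same Taylor bound $\sup_{|\xi|\le|x|^{-1}}|q(x,\xi)|\le \kappa(x)|x|^{-2}\tfrac{\alpha(x)}{2}m(x)^{\alpha(x)-2}$ for \eqref{lin-grow} and the same exponential tail estimate $\nu(x,B(-x,r))\le c\,\kappa(x)m(x)e^{-|x|m(x)/4}$, which tends to $0$ by \eqref{app-eq19}, for the mapping property via Lemma~\ref{map}. The one place you diverge is the well-posedness of the truncated problems: the paper does not factor $q_k=\kappa_k\tilde q_k$ and route through Theorem~\ref{app-5}; it observes that $q_k$ itself is a relativistic stable-like symbol with bounded, locally H\"older, bounded-away-from-zero coefficients and cites \cite{matters} (see also \cite{diss}) directly for well-posedness --- these references are multidimensional, so the $d\ge 2$ concern you flag (correctly, if one only had the one-dimensional Theorem~\ref{app-3} / \cite{parametrix} to hand) is resolved by citation rather than by adaptation. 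Your detour through \cite[Theorem 4.2]{ltp} and Theorem~\ref{app-5} is valid and mirrors the pattern of Example~\ref{app-7}, but it is not needed once the bounded-coefficient case is taken from \cite{matters} with $\kappa_k$ already included in the symbol.
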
 

Note that $\kappa$ and $m$ do not need to be of linear growth; for instance if $\inf_x \alpha(x)>0$, then we can choose $m(x) := e^{|x|}$ and $\kappa(x) := (1+|x|^k)$ for $k \geq 1$.

\begin{proof}[Proof of Theorem~\ref{app-9}]
	For a function $f: \mbb{R}^d \to \mbb{R}$ set \begin{equation*}
		f_i(x) := f(x) \I_{B(0,i)}(x) + f \left(i \frac{x}{|x|} \right) \I_{B(0,i)^c}(x)
	\end{equation*}
	and define \begin{equation*}
		q_i(x,\xi) := \kappa_i(x) \left[ (|\xi|^2 + m_i(x)^2)^{\alpha_i(x)/2} -m_i(x)^{\alpha_i(x)} \right].
	\end{equation*}
	Since $\kappa_i$, $\alpha_i$ and $m_i$ are bounded H\"{o}lder continuous functions which are bounded away from $0$, it follows from \cite{matters}, see also \cite{diss}, that the $(q_k(x,D),C_c^{\infty}(\mbb{R}^d))$-martingale problem is well-posed. Consequently, the assertion follows from Corollary~\ref{1.2} if we can show that $q$ satisfies \eqref{lin-grow} and that the pseudo-differential operators $q(x,D)$ and $q_i(x,D)$, $i \geq 1$, map $C_c^{\infty}(\mbb{R}^d)$ into $C_{\infty}(\mbb{R}^d)$. An application of Taylor's formula yields \begin{align*}
		\sup_{|\xi| \leq |x|^{-1}} |q(x,\xi)|
		&\leq \kappa(x) \big[ (|x|^{-2} +m(x)^2)^{\alpha(x)/2}- (m(x)^2)^{\alpha(x)/2} \big] \\
		&\leq \kappa(x) \frac{1}{|x|^2} \frac{\alpha(x)}{2} m(x)^{\alpha(x)-2}, %betrachte zwischenstelle \xi \in (m^2,m^2+1/|x|^2) und verwende Monotonie von y^{\alpha(x)/2-1}
	\end{align*}
	and by \eqref{app-eq17} this implies \eqref{lin-grow}. It remains to prove the mapping properties of $q(x,D)$ and $q_i(x,D)$. Since $x \mapsto q_i(x,\xi)$ is continuous and \begin{equation*}
		\sup_{|\xi| \leq |x|^{-1}} |q(x,\xi)|
		\leq \|\kappa_i\|_{\infty} \left( \inf_{|x| \leq i} m(x) \right)^{-2} \frac{1}{|x|^2} \xrightarrow[]{|x| \to \infty} 0
	\end{equation*}
	it follows from Lemma~\ref{map} that $q_i(x,D)(C_c^{\infty}(\mbb{R}^d)) \subseteq C_{\infty}(\mbb{R}^d)$. To prove $q(x,D)(C_c^{\infty}(\mbb{R}^d)) \subseteq C_{\infty}(\mbb{R}^d)$ we note that $x \mapsto q(x,\xi)$ is continuous, and therefore it suffices to show by Lemma~\ref{map} that \begin{equation*}
		\lim_{|x| \to \infty} \nu(x,B(-x,r)) \xrightarrow[]{|x| \to \infty} 0, \qquad r>0,
	\end{equation*}
	where $\nu(x,dy)$ is for each fixed $x \in \mbb{R}^d$ the L\'evy measure of a relativistic stable L\'evy process with parameters $(\kappa(x),m(x),\alpha(x))$. It is known that $\nu(x,dy) \leq c \kappa(x) e^{-|y| m(x)/2} \, dy$ on $B(0,1)^c$, and therefore \begin{align*}
		\nu(x,B(-x,r))
		\leq c \kappa(x) \int_{B(-x,r)} e^{-|y| m(x)/2} \, dy
		= c \kappa(x) \left( e^{-|x-r| m(x)/2} -  e^{-|x+r| m(x)/2} \right).
	\end{align*}
	For $|x| \gg 1$ and fixed $r>0$ we obtain from Taylor's formula \begin{equation*}
		\nu(x,B(-x,r))
		\leq c \kappa(x) m(x) e^{-|x| m(x)/4} 
		\xrightarrow[\eqref{app-eq19}]{|x| \to \infty} 0. \qedhere
	\end{equation*}
\end{proof}

\begin{ack}
	I would like to thank Ren\'e Schilling for helpful comments and suggestions.
\end{ack}

\end{document}